\documentclass[a4paper,10pt]{article}
\usepackage[utf8]{inputenc}
\usepackage[english]{babel}
\usepackage{comment,xcomment,amssymb,amsmath,color}
\usepackage{rotating,xypic,array}
\usepackage{latexsym,mathtools,amsthm}
\usepackage{booktabs}  
\usepackage{url} 
\usepackage[section]{placeins}
\usepackage{multirow,rotating,graphicx}
\usepackage[a4paper]{geometry}
\usepackage[colorlinks, citecolor=blue]{hyperref}
\usepackage{enumitem}
\newlist{compactenum}{enumerate}{4}
\setlist[compactenum,1]{nolistsep}
\usepackage{xspace}
\hyphenation{non-de-gen-er-ate di-men-sion}
\usepackage[dvipsnames]{xcolor}
\geometry{
 a4paper,
 total={165mm,230mm},
 }

\title{Ad-invariant metrics on nonnice nilpotent Lie algebras}
\author{Diego Conti, Viviana del Barco and Federico A. Rossi}

\makeatletter
\def\namedlabel#1#2{\begingroup
   \def\@currentlabel{#2}%
   \label{#1}\endgroup
}
\makeatother

\newtheorem{theorem}{Theorem}[section]
\newtheorem{lemma}[theorem]{Lemma}

\newtheorem{proposition}[theorem]{Proposition}
\theoremstyle{definition}

\newtheorem{example}[theorem]{Example}

\theoremstyle{remark}
\newtheorem{remark}[theorem]{Remark}

\newcommand{\R}{\mathbb{R}}
\newcommand{\im}{\mathrm{Im}\,}         
\newcommand{\lie}[1]{\mathfrak{#1}}     
\newcommand{\g}{\lie{g}}

\newcommand{\N}{\mathbb{N}}


\newcommand{\hook}{\lrcorner\,}

\newcommand{\id}{\mathrm{Id}}   

\newcommand{\Span}[1]{\operatorname{Span}\left\{#1\right\}}


\DeclareMathOperator{\diag}{diag}

\DeclareMathOperator{\ad}{ad}

\DeclareMathOperator{\Tr}{tr}

\newcolumntype{C}{>{$}c<{$}}
\newcolumntype{L}{>{$}l<{$}}
\newcolumntype{R}{>{$}r<{$}}

\newcommand{\st}{\;\mid\;}          



\newcommand{\mg}{\mathfrak{g}}

\newcommand{\mn}{\mathfrak{n}}
\newcommand{\mz}{\mathfrak{z}}
\newcommand{\mk}{\mathfrak{k}}

\newcommand{\mh}{\mathfrak{h}}

\newcommand{\lela}{\left\langle}
\newcommand{\rira}{\right\rangle}
\newcommand{\bil}{\lela \,,\,\rira}


\newcounter{rowno}
\setcounter{rowno}{0}

\begin{document}
\maketitle

\begin{abstract}
We proved in previous work that all real nilpotent Lie algebras of dimension up to $10$ carrying an ad-invariant metric are nice, i.e. they admit a nice basis in the sense of Lauret, Nikolayevsky and Will. In this paper we show by constructing explicit examples that nonnice irreducible nilpotent Lie algebras admitting an ad-invariant metric exist for every dimension greater than $10$ and every nilpotency step greater than $2$. In the way of doing so, we introduce a method to construct Lie algebras with ad-invariant metrics called the single extension, as a parallel to the well-known double extension procedure.
\end{abstract}

\renewcommand{\thefootnote}{\fnsymbol{footnote}}
\footnotetext{\emph{MSC class 2020}: \emph{Primary} 17B30; \emph{Secondary} 53C30, 53C50, 17B01}
\footnotetext{\emph{Keywords}: ad-invariant metric, nice Lie algebra, free nilpotent Lie algebra.}
\renewcommand{\thefootnote}{\arabic{footnote}}

\section*{Introduction}
A nondegenerate scalar product of any signature on a Lie algebra is called an ad-invariant metric if inner derivations are skew-symmetric. From the geometrical point of view this is a natural condition to consider. Indeed, it implies that the induced metric on the associated simply connected Lie group is bi-invariant and thus its Levi-Civita connection (as its curvature) is entirely determined by the Lie bracket~\cite{oneill}. In particular, ad-invariant metrics on nilpotent Lie algebras induce Ricci-flat Lie groups.

Lie algebras admitting a Riemannian ad-invariant metric are products of a compact Lie algebra and an abelian factor.
For general signature, the classification of Lie algebras admitting ad-invariant metrics is an open problem, which has been solved when restricted to particular subfamilies  \cite{BaumKath,dBO12,dBOV,FavreSantharoubane:adInvariant,Kath_Olbrich_2006,
Kath:NilpotentSmallDim,DuongPinczonUshirobira:ANewInvariant}. In fact,
some of these classifications also list all possible ad-invariant metrics, up to isometry; it turns out that many solvable Lie algebras only admit one ad-invariant metric (see \cite{BaBe07,ContiDelBarcoRossi:Uniqueness,MeRe93} for some results on the uniqueness of ad-invariant metrics).

In particular, real nilpotent Lie algebras with an ad-invariant metric are classified up to dimension $10$ in~\cite{Kath:NilpotentSmallDim}. In~\cite{ContiDelBarcoRossi:Uniqueness}, we showed that the irreducible Lie algebras appearing in this classification admit a unique ad-invariant metric (up to sign); as a step in the proof, we proved that all of them admit a \emph{nice basis}. Nice bases are mostly studied in the context of Einstein Riemannian metrics on solvmanifolds. They were first introduced in \cite{LauretWill:EinsteinSolvmanifolds}, and received their (now widely accepted) name in \cite{Nikolayevsky}. Lie algebras with a nice basis, called nice Lie algebras in this paper, possess a strong algebraic structure, which can be encoded in a special kind of directed graph. Nice nilpotent Lie algebras have been classified up to dimension $6$ in \cite{FernandezCulma:classification2,LauretWill:diagonalization} and up to dimension $9$ in~\cite{ContiRossi:Construction} (see also~\cite{KadiogluPayne:Computational} for the particular case where the Nikolayevsky derivation is simple and the root matrix is surjective); these classifications show that in small dimension most, but not all, nilpotent Lie algebras are nice.

A natural question arises: does every nilpotent Lie algebra carrying ad-invariant metrics fulfill the nice condition?

In this paper we answer this question in the negative. More precisely, we consider the problem of determining which steps of nilpotency and dimensions can be attained by a nonnice nilpotent Lie algebra admitting an ad-invariant metric. Since the direct sum of two Lie algebras admitting an ad-invariant metric also admits an ad-invariant metric, we restrict our attention to irreducible Lie algebras, i.e. those which cannot be decomposed as a direct sum.

We show that for every $d>10$ there exists an irreducible nonnice nilpotent Lie algebra of dimension $d$ admitting an ad-invariant metric. In addition, we prove that for every $s>2$ there exists an irreducible nonnice nilpotent Lie algebra of step $s$ admitting an ad-invariant metric.

We obtain these Lie algebras by three different constructions. Two are classical methods to construct Lie algebras with ad-invariant metrics, namely the double extension procedure \cite{FavreSantharoubane:adInvariant,MedinaRevoy} and considering the ad-invariant metric given by the natural pairing on any cotangent Lie algebra. The third one is new; we dub it \emph{single extension} due to its close resemblance to double extensions.

The main difficulty that we faced in the process of producing the examples is to actually prove that a given Lie algebra is not nice. Recall that a Lie algebra is nice if it admits a basis, called a nice basis, such that the structure constants satisfy certain vanishing conditions (see Section~\ref{sec:first} for the precise definition). One of the methods we use to show nonexistence of such a basis takes into account a canonical derivation that exists on any Lie algebra, known as the Nikolayevsky, or pre-Einstein derivation (see~\cite{Nikolayevsky}). We observe that, if a nice basis exists, one can assume that the eigenspaces of the Nikolayevsky derivation are generated by nice basis elements. This fact combined with  a counting argument allows us to prove that a nice basis does not exist for free nilpotent Lie algebras of step greater than two on more than two generators (Theorem~\ref{thm:rude}). Since these free nilpotent Lie algebras do not admit ad-invariant metrics~\cite{dBO12}, we consider their cotangents.

It is easy to see that a nice basis $\{e_i\}$ of a Lie algebra $\g$ determines a nice basis of the cotangent $T^*\g$, obtained by adding to $\{e_i\}$ the elements of its dual basis. Thus, the cotangent of any nice Lie algebra is nice. However the converse is not clear at all, since a cotangent Lie algebra $T^*\g$ may admit a nice basis which does not contain a basis of $\g$.

We prove that this converse statement does hold in the special case of free nilpotent Lie algebras. Thus, the cotangent of a nonnice free nilpotent Lie algebra is again nonnice; this gives rise to nonnice nilpotent Lie algebras of any step $>2$ with an ad-invariant metric.

In order to obtain nonnice nilpotent Lie algebras with an ad-invariant metric in all dimensions greater than $10$, we construct two explicit examples in dimensions $11$ and $12$, and then build on the $12$-dimensional example using single and double extensions. In the $11$-dimensional case, nonexistence of a nice basis is proved by studying the eigenspaces of the Nikolayevsky derivation and a direct inspection of the Lie algebra structure; for the others, we use a different argument using quotients and the low dimensional classification of nice Lie algebras given in~\cite{ContiRossi:Construction}.

In both cases, some extra work is needed in order to prove that the constructed Lie algebras are in fact irreducible. To this end, we prove two technical results; one concerns a special class of positively-graded Lie algebras, the other a particular type of metric Lie algebra with a distinguished nondegenerate subspace which encodes part of the Lie algebra structure in a suitable sense.

Even though all results in this paper are stated over $\R$, many of the constructions work over more general fields.

We have not been able to produce nonnice $2$-step nilpotent Lie algebras carrying ad-invariant metrics with the techniques of this paper. Large classes of nonnice nilpotent Lie algebras of step two are proved to exist by a counting argument in \cite[Example~3]{Nikolayevsky}; as the construction is not explicit, it is not clear if any of the resulting Lie algebras admit an ad-invariant metric.
Whether such examples exist is left as an open problem.

\section{Nonnice Lie algebras with an ad-invariant metric in low dimensions}
\label{sec:first}

In this section we present an example of an $11$-dimensional nilpotent Lie algebra which admits ad-invariant metrics and is not nice. Furthermore, we introduce a $12$-dimensional example with similar characteristics, together with some technical results which will be of use in the next section to construct higher dimensional examples.

We start by recalling the definition of nice Lie algebras. A Lie algebra is called \emph{nice} if it admits a basis $\{e_1,\dotsc, e_n\}$ such that every Lie bracket of the form $[e_i,e_j]$ is a multiple of an element of the basis and, if we denote by $\{e^1,\dotsc, e^n\}$ the dual basis and by $d$ the Chevalley-Eilenberg differential, every $e_i\hook de^j$  has to be a multiple of some element of the dual basis (see \cite{LauretWill:EinsteinSolvmanifolds,Nikolayevsky}). In this case, we say that $\{e_1,\dotsc, e_n\}$ is a \emph{nice basis}. A  Lie algebra is \emph{nonnice} if it does not admit any nice basis.

Recall from~\cite{Nikolayevsky} that every Lie algebra $\mg$ admits a semisimple derivation $N$ such that for every derivation $D$ of $\mg$, one has
\begin{equation}\label{eq:nikdef}
\Tr(ND)=\Tr(D).
\end{equation}
This derivation $N$ is unique up to automorphisms of $\mg$, and (any) such $N$ is called the Nikolayevsky (or pre-Einstein) derivation of $\mg$. Computing the Nikolayevsky derivation is a straightforward but tedious linear computation; for the examples appearing in this paper, we used the program~\cite{Conti:Gleipnir}.

When the eigenvalues of the Nikolayevsky derivation have multiplicity one, eigenvectors form a nice basis (see~\cite{LauretWill:diagonalization}). By way of converse,
when the Lie algebra $\mg$ is nice, then the Nikolayevsky derivation can be assumed to be diagonal relative to the nice basis (see~\cite[Proof of Theorem~3]{Nikolayevsky} and \cite[Theorem~3.1]{Payne:Applications}); in other words, $\mg$ admits a nice basis constituted by eigenvectors of the Nikolayevski derivation. These two facts indicate that nonnice Lie algebras should be sought among Lie algebras whose Nikolayevsky derivation has large eigenspaces, and that nonexistence of a nice basis can be proved by only considering bases that reflect the eigenspace decomposition.
\begin{proposition}
\label{prop:nonnice11}
The $11$-dimensional nilpotent Lie algebra
\begin{equation}\label{eq:not}
\g=(0,0,- e^{12},0,\frac{1}{2}  e^{24},e^{13}-\frac{1}{2} e^{24},e^{14}-e^{23},-2 e^{16}+e^{27},e^{26}-e^{17}+e^{25},e^{18}-e^{46}-e^{37}-e^{45},e^{28}-2 e^{36}-e^{47})
\end{equation}
has an ad-invariant metric
\[g=-e^1\odot e^{11}+ e^3\odot e^8+e^4\odot e^9+e^2\odot e^{10}-2e^5\otimes e^5+2e^6\otimes e^6 +e^7\otimes e^7,\]
and it does not admit any nice basis.
\end{proposition}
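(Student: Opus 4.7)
The proof splits into two parts: verifying that $g$ is ad-invariant and showing that $\g$ admits no nice basis. The first part reduces to a routine check of $g([e_i, e_j], e_k) + g(e_j, [e_i, e_k]) = 0$ on basis triples. Since the metric pairs a degree-$k$ element with a degree-$(6-k)$ element under the $N$-grading described below (with a self-dual degree-$3$ block on $e_5, e_6, e_7$), a triple can contribute only if the three degrees sum to $6$; this cuts the verification to a short list of cases, each handled by direct expansion of the brackets read from~\eqref{eq:not}.

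For the nonexistence of a nice basis, I would first determine the Nikolayevsky derivation. Asking for a semisimple derivation $N$ diagonal in the given basis, the derivation condition forces $\lambda_k = \lambda_i + \lambda_j$ for every nonzero structure constant $c^k_{ij}$; solving this linear system yields the one-parameter family with $\lambda = (1, 1, 2, 2, 3, 3, 3, 4, 4, 5, 5)$ (up to scale). I would then verify that $N$ satisfies $\Tr(ND) = \Tr(D)$ on a basis of $\Der(\g)$, so $N$ is the Nikolayevsky derivation. By~\cite{Payne:Applications}, any hypothetical nice basis can be assumed to consist of $N$-eigenvectors, and therefore to split according to the decomposition $\g = E_1 \oplus E_2 \oplus E_3 \oplus E_4 \oplus E_5$ of dimensions $(2, 2, 3, 2, 2)$.

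A contradiction is then derived by parametrizing a general candidate nice basis $\{f_1, \ldots, f_{11}\}$ compatible with this decomposition and imposing the nice conditions. The decisive eigenspace is the three-dimensional $E_3 = \Span{e_5, e_6, e_7}$: each of the four brackets $[f_i, f_j]$ with $i \in \{1, 2\}$ and $j \in \{3, 4\}$ must lie on a single coordinate axis among $f_5, f_6, f_7$, and dually each matrix of $\ad(f_i)|_{E_2}$ in the nice bases must be a generalized permutation matrix. I would first prune the parameter space using intrinsic invariants: computing the center $Z(\g)$ and observing, for instance, that $Z(\g) \cap E_4$ is spanned by $e_9$ pins $f_9$ down to a scalar multiple of $e_9$; similar reductions come from the lower central series and the derived algebra, fixing preferred directions in $E_1$ and $E_2$. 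The main obstacle is the remaining case analysis. The key structural observation is that the $e_5$-component of any bracket in $\g$ is produced only by $[e_2, e_4]$ (from $de^5$), while $e_5$ occurs as a source in two further brackets producing $e_9$ and $e_{10}$; these asymmetric occurrences rigidly constrain the projections of $f_2, f_4$ onto $e_2, e_4$ in a way that, combined with the nice conditions on the brackets landing in $E_4$ and $E_5$, yields an inconsistent polynomial system, giving the desired contradiction.
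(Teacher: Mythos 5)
Your first half matches the paper: the ad-invariance check is routine, and the Nikolayevsky derivation you compute (eigenvalues $(1,1,2,2,3,3,3,4,4,5,5)$, eigenspaces of dimensions $2,2,3,2,2$) agrees with the paper's $\frac{33}{119}\diag(1,1,2,2,3,3,3,4,4,5,5)$, as does the reduction, via \cite{Payne:Applications}, to a nice basis adapted to these eigenspaces. The problem is the second half. Everything after ``A contradiction is then derived by parametrizing a general candidate nice basis'' is a plan, not a proof: you assert that the constraints ``yield an inconsistent polynomial system'' without ever exhibiting the system or its inconsistency. That final step is the entire content of the proposition, and as written it could not be checked or even be said to fail or succeed. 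This is a genuine gap, not a stylistic one.

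It is worth seeing how the paper closes this gap, because it avoids your parametrization altogether. The key tool is that for any element $v$ that is (a multiple of) a nice basis vector, the subspaces $\ker\ad v$ and $\im\ad v$ are spanned by nice basis vectors. Starting from the two-dimensional eigenspace $\Span{e_1,e_2}$, whose two nice basis vectors must bracket to a nonzero multiple of $[e_1,e_2]=e_3$, one deduces successively that $e_3$, then $\Span{e_6,e_7}=[e_3,\Span{e_1,e_2}]$, then $e_5$ (from $\ker\ad e_3$), then $e_9,e_{10}$ (from $\im\ad e_5$), then $e_1$ (from $\ker\ad e_5$), then $e_6$, $e_{11}$ and $e_4$ are all nice. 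Once $e_4,e_5,e_6$ are known to be nice basis vectors up to scale, the brackets $[e_4,e_5]$ and $[e_4,e_6]$ are nonzero multiples of the \emph{same} basis vector, so $e_4\hook d e^k$ (for the relevant $k$) has components along two distinct dual basis vectors, violating the second nice condition. If you want to salvage your write-up, you should either reproduce a deduction chain of this kind or actually write down and refute the polynomial system you allude to; the intermediate observations you make (the center meeting $E_4$ in $\Span{e_9}$, the special role of $[e_2,e_4]$ as the only source of an $e_5$-component) are correct but do not by themselves constitute the contradiction.
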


The notation in which the structure of $\mg$ is given above will be used throughout the paper. It means that $\mg^*$ has a basis $\{e^1, \ldots, e^{11}\}$ with the Chevalley-Eilenberg differential $d$ of each element determined by~\eqref{eq:not}, i.e. $de^1=de^2=0$, $de^3=-e^1\wedge e^2$ and so on. Notice that in \eqref{eq:not}, $e^{12}$ represents the two-form $e^1\wedge e^2$; when indices take more than one digit, they will be separated by a comma, as in $e^{1,10}$, standing for $e^1\wedge e^{10}$. The symbol $\odot$ represents symmetric product, i.e. $e^1\odot e^{11}=e^1\otimes e^{11}+e^{11}\otimes e^1$.
\begin{proof}
It is easy to check that $g$ is ad-invariant. Assume for a contradiction that a nice basis exists. We say that a subspace of $\g$ is nice if it is spanned by nice basis elements. We say that an element of $\g$ is nice if it is a multiple of an element of the basis. It is clear that for any nice element $v\in\g$, $\ker \ad v$ and $\im \ad v$ are nice. In addition, we can assume that the eigenspaces of the Nikolayevsky derivation are nice. Canonical computations show that a Nikolayevsky derivation of $\mg$ in~\eqref{eq:not}, relative to the basis $\{e_i\}$, is given by
\[\frac{33}{119}\diag(1,1,2,2,3,3,3,4,4,5,5);
\]
the eigenspaces are given by
\[\Span{e_1,e_2},\ \Span{e_3,e_4},\ \Span{e_5,e_6,e_7},\ \Span{e_8,e_9},\ \Span{e_{10},e_{11}}.\]
The nice basis of $\g$ contains two elements of $\Span{e_1,e_2}$; their Lie bracket is a nonzero multiple of $[e_1,e_2]$. Then
$e_3=[e_1,e_2]$ is nice, $[e_3,\Span{e_1,e_2}]=\Span{e_6,e_7}$ is nice and $e_5$ is nice, as it spans the intersection of $\Span{e_5,e_6,e_7}$ with $\ker \ad e_3$.
From $\im \ad e_5$ we get that $e_9$ and $e_{10}$ are nice, and from $\ker \ad e_5$ that so is $e_1$. Then $e_6=[e_1,e_3]$ is nice, hence $e_{11}=\frac12[e_3,e_6]$ is nice, and so is $e_4$, generating the space \[\{v\in\Span{e_3,e_4}\mid [v,e_6]\subset\Span{e_{10}}\}.\]
It follows that each of $e_4,e_5,e_6$ belongs to the nice basis up to a multiple; however, $[e_4,e_5]=e_{11}=[e_4,e_6]$, which is absurd.
\end{proof}

It was observed in~\cite{LauretWill:diagonalization} that any nice basis of a Lie algebra $\mg$ is adapted to the lower central series (LCS): if $\g^k$ denotes the $k$-th element of the LCS, every $\g^k$ is spanned by elements of the nice basis, and therefore every $\g/\g^k$ is nice. Notice that we will also denote the elements  $\g^1$ and $\g^2$ of the LCS by $\g'$ and $\g''$, respectively.
A similar result holds for the upper central series (UCS):
\begin{proposition}
\label{lemma:UCS}
Let $\{e_1,\dotsc, e_n\}$ be a nice basis of a Lie algebra $\g$. Then every ideal $C_k(\g)$ of the UCS is spanned by elements of the nice basis; in particular, each quotient $\g/C_k(\g)$ is nice.
\end{proposition}
\begin{proof}
The proof is by induction on $k$.

For $k=1$, $C_k(\g)$ is the center $\mz(\mg)$ of $\mg$. Since $\ker \ad e_i$ is spanned by elements of the nice basis for all $i$, the same holds for the intersection $\mz(\g)=\bigcap_{i=1}^n \ker \ad e_i$. Thus,  $\mz(\g)$ is spanned by elements of the nice basis.

For the inductive step, assume that $C_k(\mg)$ has a nice basis $\{e_1,\dotsc, e_r\}$ and let $\pi:\mg\to \g/C_k(\mg)$ be the canonical projection. Then the images $\pi(e_{r+1}),\dotsc, \pi(e_n)$ form a nice basis of  $\g/C_k(\mg)$. In particular, the center of $\g/C_k(\mg)$ is spanned by elements of the nice basis, call them $\pi(e_{r+1}),\dotsc, \pi(e_m)$. Then $C_{k+1}(\mg)$ is spanned by $e_1,\dotsc, e_m$, proving the inductive step.
\end{proof}

The fact that the 11-dimensional Lie algebra in Proposition \ref{prop:nonnice11} is nonnice and the fact that all the examples listed in~\cite{Kath:NilpotentSmallDim} are nice Lie algebras allow us to state the following:
\begin{proposition}
\label{prop:existreducible}
All the nilpotent Lie algebras of dimension $n\leq 10$ admitting ad-invariant metrics are nice. For any $n \geq 11$ there exists a nonnice Lie algebra of dimension $n$ admitting ad-invariant metrics.
\end{proposition}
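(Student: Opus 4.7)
The plan is to handle the two claims separately. For the low-dimensional claim, I would invoke Kath's classification of nilpotent Lie algebras of dimension at most $10$ admitting an ad-invariant metric~\cite{Kath:NilpotentSmallDim}, together with the observation recalled in the introduction (proved in~\cite{ContiDelBarcoRossi:Uniqueness}) that every algebra appearing in that classification is nice; this at once settles the first assertion.

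For the second assertion, the case $n=11$ is exactly Proposition~\ref{prop:nonnice11}. For $n>11$ I would form the reducible Lie algebra
\[
\g_n := \g \oplus \RR^{n-11},
\]
with $\g$ as in \eqref{eq:not}, and equip it with the orthogonal direct sum of the ad-invariant metric of Proposition~\ref{prop:nonnice11} and the standard inner product on $\RR^{n-11}$. This direct sum metric is manifestly ad-invariant, so only the nonniceness of $\g_n$ requires work. My approach is to lift the Nikolayevsky argument of Proposition~\ref{prop:nonnice11} to $\g_n$. Since every endomorphism of an abelian Lie algebra is a derivation, condition~\eqref{eq:nikdef} forces a Nikolayevsky derivation of $\RR^{n-11}$ to be the identity; and because $\RR^{n-11}$ is central in $\g_n$, a short check on the block structure of derivations of a direct sum with a central abelian factor shows that a Nikolayevsky derivation of $\g_n$ is $\tfrac{33}{119}\diag(1,1,2,2,3,3,3,4,4,5,5)\oplus\id_{n-11}$. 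Its eigenvalues on the $\g$-factor are $\tfrac{33k}{119}$ for $k=1,\dots,5$, while the abelian factor contributes the eigenvalue $1$; as none of the former equals $1$, every eigenspace of this derivation lies entirely in $\g$ or entirely in $\RR^{n-11}$.

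Suppose towards a contradiction that $\g_n$ admits a nice basis. By~\cite{Payne:Applications} it may be chosen to consist of Nikolayevsky eigenvectors, and by the previous paragraph each such element lies wholly in $\g$ or wholly in $\RR^{n-11}$. The eleven nice-basis elements landing in $\g$ span it; their pairwise brackets remain in $\g$ and so must be multiples of nice-basis elements of $\g$, and each contraction $e_i\hook de^j$ coming from the restricted set takes values in $\g^*$ and so is a multiple of a dual basis element from $\g^*$. Hence these eleven elements form a nice basis of $\g$, contradicting Proposition~\ref{prop:nonnice11}. The main obstacle I foresee is the verification that a Nikolayevsky derivation of $\g_n$ splits as claimed; once this is in hand, the non-collision of eigenvalues and the reduction to Proposition~\ref{prop:nonnice11} are immediate.
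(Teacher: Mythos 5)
Your proposal is correct, but it proves nonniceness of $\g\oplus\R^{n-11}$ by a genuinely different mechanism than the paper. The paper's proof does not touch the Nikolayevsky derivation at this point: it uses instead that any nice basis is adapted to the lower and upper central series (the LCS fact from~\cite{LauretWill:diagonalization} and Lemma~\ref{lemma:UCS}), deduces that a putative nice basis of $\g\oplus\R^{n-11}$ can be reordered so that the central elements and the elements of the derived algebra are basis vectors, and then checks that the projection to $\g$ of a suitable subset of eleven basis vectors is a nice basis of $\g$, contradicting Proposition~\ref{prop:nonnice11}. Your route via the splitting $N=\tfrac{33}{119}\diag(1,1,2,2,3,3,3,4,4,5,5)\oplus\id_{n-11}$ is sound: the block decomposition of $\Der(\g\oplus\R^{n-11})$ (diagonal blocks a derivation of $\g$ and an arbitrary endomorphism of $\R^{n-11}$, off-diagonal blocks mapping into $\mz(\g)$ and killing $\g'$) gives $\Tr(ND)=\Tr(N_\g A)+\Tr(E)=\Tr(A)+\Tr(E)=\Tr(D)$, so your candidate satisfies~\eqref{eq:nikdef} and is semisimple, hence is a Nikolayevsky derivation; and since $\tfrac{33k}{119}\neq 1$ for $k=1,\dots,5$, every eigenspace lies in one factor, so a nice basis of eigenvectors restricts to a nice basis of $\g$. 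What each approach buys: the paper's LCS/UCS argument avoids any computation with derivations of the direct sum and would apply verbatim to any nonnice summand with center contained in the derived algebra; your argument leans on the specific eigenvalues of $N_\g$ (it would break if $1$ were among them) but reuses the Nikolayevsky computation already done in Proposition~\ref{prop:nonnice11} and is in the same spirit as the eigenspace arguments the paper deploys in Section~\ref{sec:free}. The one step you flag as an obstacle --- the splitting of the Nikolayevsky derivation --- is exactly the block computation above and does go through.
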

\begin{proof}
Nilpotent Lie algebras of dimension $\leq 10$ admitting ad-invariant metrics were classified by Kath in~\cite{Kath:NilpotentSmallDim} and they were shown to be nice in~\cite{ContiDelBarcoRossi:Uniqueness} (see Table~$1$ therein).

For any dimension $n=11+k\geq 11$ take $\mg\oplus \R^k$, where $\mg$ is the $11$-dimensional Lie algebra in Proposition~\ref{prop:nonnice11}.

Observe that $\mg\oplus\R^k$ is nonnice as well: call $\pi$ the projection $\mg\oplus\R^k\to \mg$. Assume for a contradiction that $\mg\oplus\R^k$ has a nice basis $\{v_1,\dotsc, v_{11+k}\}$;
then it is adapted to both the UCS and the LCS, so we can assume up to reordering that the center is generated by $v_{9},\dotsc, v_{11+k}$, and $v_{9},v_{10},v_{11}$ lie in $\mg'$. Then $\pi(v_1),\dotsc, \pi(v_{11})$ are linearly independent, and they form a nice basis of $\mg$, which is absurd.
\end{proof}
The construction of Proposition~\ref{prop:existreducible} suggests a natural question, namely whether \emph{irreducible} nonnice nilpotent Lie algebras with an ad-invariant metric exists in any dimension $n\geq 11$. For $n=11$, we can answer in the affirmative by observing that if the Lie algebra $\mg$ of Proposition~\ref{prop:nonnice11} were reducible, say $\mh=\mh_1\oplus\mh_2$, then both $\mh_1$ and $\mh_2$ would be nice by the first part of Proposition~\ref{prop:existreducible}, which is absurd. For $n>11$, we will answer this question in the affirmative in the rest of this  section and the next. The proof will make use of the Lie algebra constructed in \cite[Example~2.11]{ContiDelBarcoRossi:Uniqueness}, which we now recall.
\begin{example}\label{ex:NilpWeakSolDim12}
Consider the $12$-dimensional nilpotent irreducible Lie algebra $\mh$ given by:
\begin{multline*}
(0,0,e^{14},e^{12},e^{47}+e^{28}+e^{39}+e^{10,11},e^{18}+e^{4,10}-e^{3,11}-e^{9,11},\\
- e^{19}-e^{2,11}+e^{4,11},-e^{17}+e^{2,10}-e^{3,11},e^{13}-e^{2,11},e^{24}+e^{1,11},0,-e^{1,10}-e^{23}-e^{29}-e^{34}).
\end{multline*}
It admits the ad-invariant metric
\[h=-e^1\odot e^5+e^2\odot e^6-e^3\odot e^7-e^4\odot e^8-e^9\otimes e^{9}+e^{10}\otimes e^{10}+e^{11}\odot e^{12}\]
and one can easily check that
\begin{equation}\label{eq:scdh}
\mh'=\Span{e_4}\oplus \mh'', \;\mh''=\Span{e_3,e_5,e_6,e_7,e_8,e_9,e_{10},e_{12}} \mbox{ and } \mz(\mh)=\Span{e_5,e_6,e_{12}}.
\end{equation}

This Lie algebra appeared in~\cite{ContiDelBarcoRossi:Uniqueness} as an example of a nilpotent Lie algebra admitting more than one ad-invariant metric up to automorphisms, but only one up to automorphisms and rescaling. This is linked to the fact that the Nikolayevsky derivation of $\mh$ is zero, unlike the $11$-dimensional example of Proposition~\ref{prop:nonnice11} (see \cite[Proposition~2.21]{ContiDelBarcoRossi:Uniqueness}). We will denote by $\lie{n}=\mh/\mz(\mh)$ the quotient of $\mh$ by its center $\mz(\mh)$, whose structure equations are
\[(0,0,e^{14},e^{12},-e^{17}-e^{29}+e^{49},
-e^{15}+e^{28}-e^{39},e^{13}-e^{29},e^{24}+e^{19},0).\]
The nilpotent Lie algebra $\mn$ will be shown below to be nonnice, and this is what makes $\mh$ an effective base for constructing irreducible nonnice Lie algebras with an ad-invariant metric in higher dimensions.
\end{example}

In order to show that the $12$-dimensional Lie algebra in Example~\ref{ex:NilpWeakSolDim12} is nonnice, we will need the following:
\begin{lemma}\label{lem:Center12DimNNnice}
Both the $9$-dimensional Lie algebra $\lie{n}$ from Example~\ref{ex:NilpWeakSolDim12} with structure equations
\[(0,0,e^{14},e^{12},-e^{17}-e^{29}+e^{49},
-e^{15}+e^{28}-e^{39},e^{13}-e^{29},e^{24}+e^{19},0)\]
and the $10$-dimensional Lie algebra $\tilde{\lie{n}}$ with structure equations
\[
(0,0,e^{14},e^{12},- e^{17}-e^{29}+e^{49},-e^{15}+e^{28}-e^{39},e^{13}-e^{29},e^{24}+e^{19},0,
e^{19})
\]
are not nice.
\end{lemma}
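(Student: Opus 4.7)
The plan is to reduce both assertions to a single nonniceness check on one $8$-dimensional quotient. Direct inspection of the structure equations shows $\mz(\lie{n}) = \Span{e_6}$ and $\mz(\tilde{\lie{n}}) = \Span{e_6, e_{10}}$, since $e^6$ (respectively $e^6$ and $e^{10}$) does not appear on the right-hand side of any structure equation. Moreover, $\tilde{\lie{n}}$ differs from $\lie{n}$ only by the extra central direction $e_{10}$ added as a summand of $[e_1, e_9]$, so passing to the central quotient in both cases yields a common $8$-dimensional Lie algebra $\mq$ with structure equations
\[
(0, 0, f^{14}, f^{12}, -f^{16} - f^{28} + f^{48}, f^{13} - f^{28}, f^{24} + f^{18}, 0),
\]
where $(f_1, \dotsc, f_8)$ corresponds to $(e_1, e_2, e_3, e_4, e_5, e_7, e_8, e_9)$ modulo the respective centers. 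By Lemma~\ref{lemma:UCS}, if either $\lie{n}$ or $\tilde{\lie{n}}$ were nice, then so would be $\mq$, reducing both claims to showing that $\mq$ is not nice.

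The obvious obstruction in this basis is that $[f_2, f_8] = f_5 + f_6$ is not a multiple of a basis element. To rule out the existence of \emph{any} nice basis, I would compute the invariants of $\mq$---nilpotency step ($5$), lower central series dimensions $(8, 5, 4, 2, 1)$, upper central series dimensions $(2, 3, 5, 6, 8)$, number of generators ($3$), and the Jordan type of $\ad v$ for generic elements---and compare $\mq$ against the classification of nice nilpotent Lie algebras of dimension $\leq 9$ given in~\cite{ContiRossi:Construction}. These invariants should narrow the list of $8$-dimensional candidates significantly, and each remaining candidate can be excluded by an explicit inspection of its structure equations.

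The principal obstacle is that a direct Nikolayevsky-based chase along the lines of Proposition~\ref{prop:nonnice11} is unavailable: the linear system of eigenvalue equations imposed by the structure constants of $\mq$ admits only the zero solution, so any semisimple derivation of $\mq$ that is diagonal in $\{f_i\}$ vanishes identically. In particular, the Nikolayevsky eigenspace stratification is trivial and provides no constraint on a hypothetical nice basis. For this reason the proof goes through the low-dimensional classification rather than through an eigenspace argument; modulo this classification, the reduction via Lemma~\ref{lemma:UCS} is elementary.
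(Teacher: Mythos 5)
Your reduction is correct and even elegant: the centers are indeed $\mz(\lie{n})=\Span{e_6}$ and $\mz(\tilde{\lie{n}})=\Span{e_6,e_{10}}$, neither $e_6$ nor $e_{10}$ occurs in any other structure equation, so both central quotients are the same $8$-dimensional algebra $\mq$, and Lemma~\ref{lemma:UCS} (with $k=1$) correctly transfers niceness to $\mq$. This would unify the two cases into a single check one dimension lower than the paper's, which treats $\lie{n}$ and $\tilde{\lie{n}}$ separately and in dimension $10$ even needs a computer-assisted extension of the classification of~\cite{ContiRossi:Construction}. The problem is that the check itself is never performed: everything now rests on the claim that $\mq$ is not nice, and for that you offer only a plan (``compare invariants against the classification \dots each remaining candidate can be excluded by an explicit inspection''). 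The observation that $[f_2,f_8]=f_5+f_6$ is not a multiple of a basis vector only shows that \emph{this} basis is not nice. Until you actually enumerate the nice $8$-dimensional algebras with LCS $(8,5,4,2,1)$ and UCS $(2,3,5,6,8)$ and rule out an isomorphism with each, nothing is proved --- and if $\mq$ happened to be nice, the whole reduction would be a dead end with no fallback. So as written this is a strategy, not a proof.

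There is also a smaller soft spot in your discussion of the Nikolayevsky derivation of $\mq$. You correctly compute that the only derivation of $\mq$ that is \emph{diagonal in the basis $\{f_i\}$} is zero, but that does not by itself show the Nikolayevsky derivation of $\mq$ vanishes (it need not be diagonal in that basis a priori). More importantly, you treat this as an obstacle, whereas it is exactly the discriminating invariant the paper uses: for $\lie{n}$ the paper finds precisely two nice candidates (\texttt{965321:18a,b}) with matching LCS/UCS and excludes both because their Nikolayevsky derivation is nonzero while that of $\lie{n}$ vanishes; for $\tilde{\lie{n}}$ it does the same against the ten candidates of Table~\ref{Table10Dim}. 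To complete your argument in the same style you would need to (i) establish that the Nikolayevsky derivation of $\mq$ is genuinely zero, and (ii) list the $8$-dimensional nice algebras with the right LCS/UCS from~\cite{ContiRossi:Construction} and verify that none has vanishing Nikolayevsky derivation (or otherwise exclude an isomorphism). With those two steps filled in, your route would be a legitimate and somewhat more economical alternative to the paper's; without them, the central claim is unsupported.
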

\begin{proof}
The Lie algebra $\mn$ is $6$-step nilpotent. The dimensions of the ideals in the UCS of $\lie{n}$ are $(\dim C_i(\lie{n}))_{i=0}^5\allowbreak=\allowbreak(1,3,4,6,7,9)$, and the dimensions of the ones in the LCS are $(\dim\lie{n}^i)_{i=0}^5\allowbreak=\allowbreak(9,6,5,3,2,1)$. Looking at the list of nice Lie algebras of dimension $9$ of~\cite{ContiRossi:Construction}, we see that there are two Lie algebras that have the same dimensions as $\mn$ of the LCS and UCS, namely
\begin{equation}\label{eq:18ab}
\begin{gathered}
\texttt{965321:18a}\qquad(0,0,0,-e^{12},e^{14},e^{24}, e^{15} + e^{23},e^{17} + e^{34},e^{18} + e^{26} + e^{35})\\
\texttt{965321:18b}\qquad(0,0,0,-e^{12},e^{14},e^{24},-e^{15} + e^{23},e^{17} + e^{34},e^{18} + e^{26} + e^{35}).
\end{gathered}
\end{equation}
So if $\lie{n}$ is nice, it must be isomorphic to one of these two Lie algebras. However, the Nikolayevsky derivation of $\lie{n}$ is zero, whilst \texttt{965321:18a,b} have nonzero Nikolayevsky derivation (see~\cite{ContiRossi:NiceNilsolitons}). Indeed, one can check that their Nikolayevsky derivation, in the basis satisfying~\eqref{eq:18ab}, is given by
\[\frac6{29}\diag(1,2,3,3,4,5,5,6,7).\]
So $\mn$ cannot be isomorphic to any of those Lie algebras, and $\lie{n}$ is not nice.

For $\tilde{\lie{n}}$ we follow the same proof. First note that the Nikolayevsky derivation of $\tilde{\lie{n}}$ vanishes.

A computer assisted classification, using the algorithm of \cite{ContiRossi:Construction}, \cite{demonblast}, shows that the nice Lie algebras of dimension $10$ with UCS $(\dim \tilde{\lie{n}}^i)_{i=0}^5=(2,4,5,7,8,10)$ and LCS $(\dim C_i(\tilde{\lie{n}}))_{i=0}^5=(10,7,5,3,2,1)$ are the ones in Table~\ref{Table10Dim}. However, none of them has a zero Nikolayevsky derivation, so $\tilde{\lie{n}}$ is not nice.
{\setlength{\tabcolsep}{2pt}
\begin{table}[ht]
\centering
{\small
\caption{\label{Table10Dim} $10$-dimensional nice nilpotent Lie algebras with UCS $(2,4,5,7,8,10)$ and LCS $(10,7,5,3,2,1)$, and their Nikolayevsky derivation}
\begin{tabular}{ C C}
\toprule
 \text{Lie algebra} & \text{Nikolayevsky derivation}\\
\midrule
 0,0,0,e^{12},e^{13},e^{14},e^{35},e^{16}+e^{23},e^{18}-e^{34},e^{19}-e^{36} & \frac{2}{37}(26, 23, 21, 20, 17, 14, 12, 11, 9, 3)\\
 0,0,0,e^{12},e^{13},e^{14},e^{15},e^{23}+e^{16},-e^{34}+e^{18},-e^{36}+e^{19} & \frac{2}{137}(101, 88, 75, 65, 62, 52, 49, 39, 36, 13)\\
 0,0,0,e^{12},e^{13},e^{14},e^{24}+e^{35},e^{16}+e^{23},e^{18}-e^{34},e^{19}-e^{36} & \frac{24}{137}(8, 7, 7, 6, 5, 4, 4, 3, 3, 1)\\
 0,0,0,e^{12},e^{13},e^{14},-e^{35}+e^{24},e^{23}+e^{16},-e^{34}+e^{18},-e^{36}+e^{19} & \frac{24}{137}(8, 7, 7, 6, 5, 4, 4, 3, 3, 1)\\
 0,0,0,e^{12},e^{13},e^{14},e^{24}+e^{15},-e^{23}+e^{16},e^{34}+e^{18},e^{36}+e^{19} & \frac{4}{19}(7, 6, 5, 5, 4, 4, 3, 3, 2, 1)\\
 0,0,0,e^{12},e^{13},e^{14},e^{15}+e^{24},e^{16}+e^{23},e^{18}-e^{34},-e^{36}+e^{19} & \frac{4}{19}(7, 6, 5, 5, 4, 4, 3, 3, 2, 1)\\
 0,0,0,e^{12},e^{13},e^{14},e^{35},e^{16}+e^{23},e^{18}-e^{34},e^{19}+e^{24}-e^{36} & \frac{27}{173}(9, 8, 7, 7, 6, 5, 4, 4, 3, 1)\\
 0,0,0,e^{12},e^{13},e^{14},e^{15},e^{23}+e^{16},-e^{34}+e^{18},e^{24}-e^{36}+e^{19} & \frac{26}{161}(9, 8, 7, 6, 5, 5, 4, 4, 3, 1)\\
 0,0,0,e^{12},e^{13},e^{14},e^{24},e^{23}+e^{16},-e^{34}+e^{18},e^{27}-e^{36}+e^{19} & \frac{4}{19}(7, 6, 5, 5, 4, 4, 3, 3, 2, 1)\\
 0,0,0,e^{12},e^{13},e^{14},e^{24},e^{16}+e^{23},e^{18}-e^{34},e^{19}-e^{27}-e^{36} & \frac{4}{19}(7, 6, 5, 5, 4, 4, 3, 3, 2, 1)\\
\end{tabular}
}
\end{table}
}
\end{proof}

\begin{proposition}\label{pr:WeakSolDim12NotNice}
There exists an irreducible $12$-dimensional Lie algebra with an ad-invariant metric that is not nice.
\end{proposition}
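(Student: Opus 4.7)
The plan is to take the 12-dimensional Lie algebra $\mh$ from Example~\ref{ex:NilpWeakSolDim12} as the witness for the proposition. Three things need to be verified: that $\mh$ is 12-dimensional and carries an ad-invariant metric, that it is irreducible, and that it is nonnice. The first is immediate from the explicit structure equations and the given bilinear form $h$ in Example~\ref{ex:NilpWeakSolDim12}. The irreducibility of $\mh$ was already established in~\cite{ContiDelBarcoRossi:Uniqueness}, where this Lie algebra was introduced, so it may be cited. Thus the real task is to show that $\mh$ does not admit a nice basis.

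For this, I would invoke Lemma~\ref{lemma:UCS}: if $\mh$ had a nice basis, then for every $k$ the quotient $\mh/C_k(\mh)$ would inherit a nice basis. Applying this with $k=1$, the quotient $\mh/\mz(\mh)$ would be nice. But by construction this quotient is precisely the $9$-dimensional Lie algebra $\mn$ discussed in Lemma~\ref{lem:Center12DimNNnice}, which has been shown there to be nonnice. This contradiction forces $\mh$ itself to be nonnice, completing the proof.

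The substantive work has already been done upstream: Lemma~\ref{lem:Center12DimNNnice} is where the non-triviality lies, since it compares the UCS/LCS dimension profile $(9,6,5,3,2,1)$/$(1,3,4,6,7,9)$ of $\mn$ with the classification of 9-dimensional nice Lie algebras in~\cite{ContiRossi:Construction} and then rules out the two candidates \texttt{965321:18a} and \texttt{965321:18b} by a Nikolayevsky derivation argument (the derivation of $\mn$ vanishes, whereas both candidates have nonzero Nikolayevsky derivation). Given that lemma and Lemma~\ref{lemma:UCS}, the present proposition is essentially a one-line corollary; no further case analysis or structural computation inside $\mh$ is required.
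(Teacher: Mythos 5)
Your argument for nonniceness is exactly the paper's: assume a nice basis exists, apply Lemma~\ref{lemma:UCS} with $k=1$ to conclude that $\mh/\mz(\mh)\cong\mn$ is nice, and contradict Lemma~\ref{lem:Center12DimNNnice}. That part is fine and requires no further comment.

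Where you diverge is irreducibility, which you dispose of by citing \cite{ContiDelBarcoRossi:Uniqueness}. The paper does not do this; it gives a short self-contained argument, and the argument is worth reproducing because it cleverly reuses the nonniceness you have just established. Namely: suppose $\mh=\mh_1\oplus\mh_2$ with the ideals orthogonal. Since a direct sum of nice Lie algebras is nice, at least one summand, say $\mh_1$, is nonnice. By \cite[Lemma~3.2]{ZhZh01} the summand $\mh_1$ inherits an ad-invariant metric, so by the first part of Proposition~\ref{prop:existreducible} (Kath's classification plus the niceness check in dimensions $\leq 10$) one gets $\dim\mh_1\geq 11$, forcing $\dim\mh_2\leq 1$. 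Finally $\mz(\mh)\subset\mh'$ rules out an abelian $\R$-factor, so $\mh_2=0$. If you prefer to rely on the external citation you should at least verify that irreducibility is actually proved there (the example is introduced in that reference for a different purpose, namely non-uniqueness of the ad-invariant metric up to automorphism, and the present paper's decision to supply its own proof suggests the authors did not consider the point already settled). As written, your proposal leaves the irreducibility claim resting on an unverified reference, which is the only real weakness; the rest is a faithful reconstruction of the paper's proof.
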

\begin{proof}
We show that the $12$-dimensional Lie algebra $\mh$ in Example~\ref{ex:NilpWeakSolDim12} is neither nice nor reducible.

Assume for a contradiction that it is nice;
by Proposition~\ref{lemma:UCS}, the quotient $\mh/\mz(\mh)$ is nice. This quotient is isomorphic to the Lie algebra $\mn$ shown to be nonnice in Lemma~\ref{lem:Center12DimNNnice}, giving an absurd.

Now suppose for a contradiction that $\mh$ is reducible, say $\mh=\mh_1\oplus\mh_2$. Then at least one of $\mh_1$, $\mh_2$ is not nice, we can assume $\mh_1$. Then $\mh_1$ also admits an ad-invariant metric (see \cite[Lemma~3.2]{ZhZh01}), so by Proposition~\ref{prop:existreducible} $\dim\mh_1\geq 11$. Since $\mz(\mh)\subset \mh'$, we have that $\R$ is not a direct factor in $\mh$, and therefore $\mh_1=\mh$ and $\mh_2=0$; thus, $\mh$ is irreducible.
\end{proof}

\section{Constructions in higher dimensions}
In this section we construct nonnice irreducible Lie algebras with an ad-invariant metric in any dimension $\geq13$. These examples will be obtained from those in Section~\ref{sec:first} using two constructions.

The first construction is the double extension, introduced in~\cite{MedinaRevoy}. Given a Lie algebra $\mh$ with an ad-invariant metric $h$ and a skew-symmetric derivation $D$, the double extension of $(\mh,h)$ by $D$ is a Lie algebra $\mg$ endowed with an ad-invariant metric $g$. As a vector space, $\mg=\mh\oplus\Span{e,z}$, with the Lie bracket given by
\begin{gather*}
[X,Y]=[X,Y]_\mh + h(DX,Y)z, \quad X,Y\in\mh;\\
[e,X]=DX,\quad  X\in\mh;\\
\ad z=0;
\end{gather*}
the metric is given by
\begin{gather*}
g(X,Y)=h(X,Y), \quad g(X,z)=0, \quad g(X,e)=0, \qquad X,Y\in\mh;\\
g(z,z)=0=g(e,e);\\
g(z,e)=1.
\end{gather*}
The second construction is the following:
\begin{proposition}
\label{prop:single}
Let $(\mh,[\,,\,]_\mh)$ be a Lie algebra with an ad-invariant metric $h$ and let $D\colon\mh\to\mh$ be a skew-symmetric derivation of rank two. Then the vector space $\g=\mh\oplus\Span{U}$ has a Lie algebra structure, defined by a Lie bracket $[\,,\,]$, and an ad-invariant metric $g$ given by
\begin{gather}\label{eq:strSE}
[X,Y]=[X,Y]_{\mh}+h(DX,Y)U, \quad [U,X]=DX,\\
g(X,Y) = h(X,Y), \quad g( X,U)=0, \quad g(U,U)=1,\label{eq:metSE}
\end{gather}
where $X,Y\in\mh$.

Moreover, if $D$ is noninner and satisfies $ \mz(\mh)\subset\ker D$, then
\begin{equation}
\label{eqn:singleexteq2}
\mg'=\mh'\oplus \Span{U}, \quad \mz(\mg)=\mz(\mh).
\end{equation}
\end{proposition}

\begin{proof}
Let $p\colon\mh\to\mh/\ker D$ be the projection. Since $D$ is skew-symmetric, we can define an alternating map
\[\gamma\colon \Lambda^3\mh\to\mh/\ker D, \quad \gamma(X,Y,Z)=h(DX,Y)p(Z)+h(DY,Z)p(X)+h(DZ,X)p(Y),\]
and it is easy to see that $\gamma(X,Y,Z)=0$ if $X\in\ker D$. Therefore, we can write
\[\gamma=p^*\tilde\gamma, \quad \tilde\gamma\colon\Lambda^3(\mh/\ker D)\to \mh/\ker D.\]
Since $D$ has rank two, $\tilde\gamma$ is zero. This implies
\begin{equation}
\label{eqn:singleext}
h(DX,Y)Z+h(DY,Z)X+h(DZ,X)Y\in \ker D.
\end{equation}
In order to prove that $[\,,\,]$ defines a Lie algebra structure on $\mg$, we need to verify the Jacobi identity on vectors $X,Y,Z$ and $X,Y,U$, where $X,Y,Z\in\mg$. We compute
\[[[X,Y],Z]=[[X,Y]_{\mh},Z]+h(DX,Y)[U,Z]=[[X,Y]_\mh,Z]_\mh +h(D[X,Y]_\mh,Z)U
+h(DX,Y)DZ.
\]
So
\begin{multline*}
[[X,Y],Z]+[[Y,Z],X]+[[Z,X],Y]\\
=[[X,Y]_\mh,Z]_\mh+[[Y,Z]_\mh,X]_\mh+[[Z,X]_\mh,Y]_\mh
+\bigl(h(D[X,Y]_\mh,Z)+h(D[Y,Z]_\mh,X)+h(D[Z,X]_\mh,Y)\bigr)U\\
\shoveright{+h(DX,Y)DZ+h(DY,Z)DX+h(DZ,X)DY}\\
=\bigl(h(-[X,Y]_\mh,DZ)+h(-[Y,Z]_\mh,DX)+h(D[Z,X]_\mh,Y)\bigr)U
+D\bigl(h(DX,Y)Z+h(DY,Z)X+h(DZ,X)Y\bigr)\\
=\bigl(h(Y,[X,DZ]_\mh)+h(-Y,[Z,DX]_\mh)+h(D[Z,X]_\mh,Y)\bigr)U=0,
\end{multline*}
where we have used that $\lie h$ is a Lie algebra, $D$ is a skew-symmetric derivation, $h$ is ad-invariant and~\eqref{eqn:singleext} holds.

Similarly,
\begin{multline*}
 [[X,Y],U]+[[Y,U],X]+[[U,X],Y]
 =-D([X,Y]_\mh)-[DY,X]+ [DX,Y]\\
 =-D([X,Y]_\mh)+[X,DY]- [Y,DX]
 =h(DX,DY)U-h(DY,DX)U=0.
\end{multline*}

Finally, ad-invariance of the metric follows from
\begin{align*}
g( [X,Y],Z) &= g( [X,Y]_{\mh},Z) + g( h(DX,Y)U,Z) = -g( Y,[X,Z]_{\mh}) = -g( Y,[X,Z]);\\
g( [X,Y],U) &= g( h(DX,Y)U,U) =h(DX,Y)=-h(X,DY)=-g( X,DY)\\
&=-g( X, [U,Y]) =g( X,[Y,U]);\\
g( [X,U],U) &=g( -DX,U) = 0 = -g( U,[X,U]);\\
g( [U,X],Y) &=g( DX,Y) = h(DX,Y)=-h(X,DY)=-g( X,[U,Y]).
\end{align*}

For the last part of the statement, assume that $D$ is noninner and $\mz(\mh)\subset\ker D$.
The inclusion $\mz(\mh)\subset\mz(\mg)$ follows from $D|_{\mz(\mh)}=0$ and~\eqref{eq:strSE}. To show the other inclusion, let $\pi\colon\mg\to\mh$ denote the projection. If $X+\lambda U$ is in $\mz(\mg)$, then  $\pi\circ(\ad X+\lambda D)=0$; since $D$ is noninner, this implies $\lambda=0$, $X\in\mz(\mh)$. Finally, taking the orthogonal complement of $\mz(\mh)=\mz(\mg)$ in $(\mg, g)$ and using~\eqref{eq:metSE} we get
\[
\mg'=\mz(\mg)^{\perp_g}=\mz(\mh)^{\perp_g}=\mh'\oplus\Span{U},\]
proving~\eqref{eqn:singleexteq2}.
\end{proof}
Given a Lie algebra $\mh$ endowed with an ad-invariant metric $h$ and a skew-symmetric derivation of rank two $D$, we will refer to the Lie algebra constructed in Proposition~\ref{prop:single} as the \emph{single extension} of $(\mh,h)$ by the derivation $D$. Notice that a single extension can be obtained from a double extension $\mh\oplus\Span{e,z}$ defined by the same derivation $D$ by formally identifying
$e$ with $z$. This identification is not a quotient by an ideal.

\begin{example}
Take $\mh=\R^4$, with the ad-invariant metric $e^1\odot e^3+e^2\odot e^4$,  and
\[D=e^1\otimes e_4-e^2\otimes e_3.\]
Then the single extension is given by
\[[e_1,e_2]=U,\quad [U,e_1]=e_4,\quad [U,e_2]=-e_3,\]
resulting in the unique irreducible $5$-dimensional nilpotent Lie algebra carrying an ad-invariant metric. The double extension of $\mh$ by $D$ is the unique irreducible $6$-dimensional nilpotent Lie algebra carrying an ad-invariant metric.
\end{example}

In order to prove that the extensions we construct are irreducible, it will be convenient to introduce another definition.

Let $(\mh,h)$, $(\mg,g)$ be Lie algebras with an ad-invariant metric. We say that an injective linear map $\iota\colon\mh\to\mg$ is a \emph{mirage} of $(\mh,h)$ in $(\mg,g)$ if the next conditions are satisfied:
\begin{enumerate}[label=(M\arabic*)]
\item \label{item:D1} $g(\iota(v),\iota(w))=h(v,w)$ for all $v,w\in\mh$;
\item \label{item:D2} for any $X\in\mh'$ and $Y\in\mh$;
$[\iota(X),\iota(Y)]=\iota([X,Y])$;
\item \label{item:D3} for any $X\in\mh'$ and $Y\in\iota(\mh)^\perp$, $[\iota(X),Y]=0$;
\item \label{item:D4} $\mg''=\iota(\mh'')$;
\item \label{item:D5} for any nondegenerate ideal $I\supset\iota(\mh)$ of $\mg$, $I^\perp$ is central.
\end{enumerate}
It is straightforward to check that for any Lie algebra with ad-invariant metric $(\mh,h)$, the identity map on $\mh$ is a mirage of $(\mh,h)$ in itself.

The following technical result identifies a class of single extensions that behave well relative to mirages:
\begin{lemma}
\label{lemma:singleextgpp}
Let $(\mh,h)$, $(\mg,g)$ be Lie algebras with an ad-invariant metric, let $\iota\colon\mh\to\mg$ be a mirage, and assume $\mz(\mh)\subset\mh'$. Let $D\colon\mg\to\mg$ be a rank-two skew-symmetric map which is not an inner derivation, satisfying $D|_{\iota(\mh')}=0$, $\im D\subset\mg''$, and assume that there is an $X\in\iota(\mh)$ such that $[X,\mg]\subset\iota(\mh)$ and $DX\neq0$. Then, $D$ is a derivation of $\mg$ and the single extension $(\mk,k)$ of $\mg$ by $D$ verifies that $\iota\colon\mh\to\mk$ is a mirage of $(\mh,h)$ in $(\mk,k)$. In addition,
\begin{equation}
\label{eqn:singleexteq}
\mk'=\mg'\oplus\Span{U}, \quad \mz(\mk)=\mz(\mg).
\end{equation}
\end{lemma}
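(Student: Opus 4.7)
The proof will proceed in three stages: first I would extract structural constraints on $D$ from the hypotheses; second I would prove the derivation property; third I would verify the mirage axioms for $\iota\colon\mh\to\mk$.

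For the first stage, since $\im D\subset\mg''=\iota(\mh'')$ by M4, we have $\im D\subset\iota(\mh)$, and the skew-symmetry of $D$ gives $\iota(\mh)^\perp\subset(\im D)^{\perp_g}=\ker D$; likewise $\mz(\mg)=(\mg')^{\perp_g}\subset(\mg'')^{\perp_g}\subset\ker D$. Writing $DX=g(X,a)b-g(X,b)a$ with $\{a,b\}$ a basis of $\im D$, the relations $Da=Db=0$ (which hold because $\im D\subset\iota(\mh'')\subset\iota(\mh')\subset\ker D$) force $a,b$ to be isotropic and mutually $g$-orthogonal. The hypothesis $D|_{\iota(\mh')}=0$ together with the linear independence of $a,b$ further forces $a,b\in(\iota(\mh'))^{\perp_g}\cap\iota(\mh)=\iota(\mz(\mh))$, and since $\mz(\mh)\subset\mh'$, axioms M2 and M3 ensure that $\iota(\mz(\mh))\subset\mz(\mg)$. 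Hence $\im D\subset\mz(\mg)$.

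For the second stage, the derivation identity $D[X,Y]=[DX,Y]+[X,DY]$ has a vanishing right-hand side because $DX,DY\in\mz(\mg)$, and the left-hand side vanishes because $[X,Y]\in\ker D$. To see the latter, by ad-invariance $g([X,Y],\iota\bar Z)=-g(Y,[X,\iota\bar Z])$, and for any $\bar Z\in\mz(\mh)\subset\mh'$ axioms M2 and M3 give $[X,\iota\bar Z]=0$; so the $\iota(\mh)$-component of $[X,Y]$ lies in $\iota(\mz(\mh))^{\perp_h}=\iota(\mh')$, whence $[X,Y]\in\iota(\mh')+\iota(\mh)^\perp\subset\ker D$. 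With $D$ now established as a skew-symmetric derivation of rank two, Lemma~\ref{lemma:single} produces the single extension $(\mk,k)$, and its ``moreover'' clause, whose hypotheses are $\mz(\mg)\subset\ker D$ and the non-innerness of $D$, yields~\eqref{eqn:singleexteq}.

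For the third stage, axioms M1--M4 for $\iota\colon\mh\to\mk$ follow directly from the single-extension formulas: M1 because $k$ extends $g$, M2 and M3 because $D\iota(X)=0$ for $X\in\mh'$ suppresses the $U$-terms in the brackets $[\iota(X),Y]_\mk$, and M4 because $\mk''=\mg''+D(\mg')=\mg''=\iota(\mh'')$. The crux is M5. For a nondegenerate ideal $I\supset\iota(\mh)$ of $\mk$, I would split on whether $U\in I$. If $U\in I$, then $I=(I\cap\mg)\oplus\R U$ with $I\cap\mg$ a nondegenerate ideal of $\mg$ containing $\iota(\mh)$; M5 for $\iota$ in $\mg$ gives $(I\cap\mg)^{\perp_g}\subset\mz(\mg)$, and $\mz(\mg)\subset\ker D$ implies $I^{\perp_k}=(I\cap\mg)^{\perp_g}\subset\mz(\mk)$. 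The delicate case is $U\notin I$: writing $I=\{V+\ell(V)U:V\in J\}$ for a linear $\ell\colon J\to\R$ with $\ell|_{\iota(\mh)}=0$, the ideal condition applied to $[\iota\bar Y,X]_\mk$ for $\bar Y\in\mh,X\in\mg$ produces $V_0\in\mg$ with $[V_0,\iota\bar Y]=D\iota\bar Y$ for all $\bar Y\in\mh$; combined with $D|_{\iota(\mh)^\perp}=0$ this gives $DX\in[V_0,X]+\iota(\mh)^\perp$ for every $X\in\mg$. The main obstacle is here: using the ambiguity of $V_0$ modulo the centralizer of $\iota(\mh)$ one must show that $V_0$ may be taken in $\iota(\mh)$ (this is where the non-innerness of $D$ enters, as otherwise $V_0$ would represent an inner derivation extending $D$), and with $V_0\in\iota(\mh)$ the containment above reads $DX\in[X,\iota(\mh)]+\iota(\mh)^\perp$ for every $X$, contradicting the existence of $X_0\in\mg$ with $DX_0\notin[X_0,\iota(\mh)]+\iota(\mh)^\perp$.
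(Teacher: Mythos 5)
Your first two stages and the verification of \ref{item:D1}--\ref{item:D4} are sound and essentially coincide with the paper's argument (the paper reaches $\im D\subset\iota(\mz(\mh))\subset\mz(\mg)$ by observing that $\im D$ is orthogonal to $\iota(\mh')\subset\ker D$ rather than via the $a\wedge b$ normal form, but that is cosmetic). The genuine gap is exactly where you flag it: the case $U\notin I$ in \ref{item:D5}. What your construction yields, at best, is a vector $V_0\in\mg$ (a metric representative of an extension of $\ell$) such that $DX-[V_0,X]\in\iota(\mh)^\perp$ for every $X\in\mg$. This does not produce a contradiction: non-innerness of $D$ rules out $D=\ad V_0$ \emph{exactly}, not modulo $\iota(\mh)^\perp$, so it gives you no control over $V_0$; and decomposing $V_0=\iota(w)+v^\perp$ with $v^\perp\in\iota(\mh)^\perp$ does not reduce matters to the hypothesis on $X$ either, because $[v^\perp,X]$ need not lie in $\iota(\mh)^\perp$ (that subspace is not an ideal). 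So the intended contradiction with $DX\notin[X,\iota(\mh)]+\iota(\mh)^\perp$ is never reached, and the step ``$V_0$ may be taken in $\iota(\mh)$'' remains unjustified.

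The paper avoids the dichotomy altogether by proving that $U\in I$ always holds; this short duality argument is the actual place where the hypothesis on $X$ enters. If every $Y\in\mh$ with $[X,\iota(Y)]=0$ satisfied $g(DX,\iota(Y))=0$, then $DX$ would lie in $\{\iota(Y)\mid Y\in\mh,\ [X,\iota(Y)]=0\}^\perp=[X,\iota(\mh)]+\iota(\mh)^\perp$ (by ad-invariance of $g$), contradicting the choice of $X$. Hence some $Y\in\mh$ has $[X,\iota(Y)]=0$ and $g(DX,\iota(Y))\neq0$, so $[X,\iota(Y)]_\mk=g(DX,\iota(Y))\,U$ is a nonzero multiple of $U$ lying in the ideal $I\supset\iota(\mh)$. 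Once $U\in I$ is known, your first case ($I=(I\cap\mg)\oplus\R U$, apply \ref{item:D5} for the mirage in $\mg$, use $\mz(\mk)=\mz(\mg)$) finishes the proof exactly as in the paper; you should replace the $U\notin I$ analysis by this observation.
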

\begin{proof}
We first prove that $D$ is a derivation of $\g$. Observe that $\im D$ is orthogonal to $\iota(\mh')\subset\ker D$ inside $\mg$. Because of~\ref{item:D1}, this implies $\im D\subset \iota(\mz(\mh))\oplus \iota(\mh)^\perp$; however, by hypothesis and~\ref{item:D4}, we have $\im D\subset
\g''=\iota(\mh'')\subset\iota(\mh)$, which gives
$\im D\subset \iota(\mz(\mh))$.
By assumption $\mz(\mh)\subset\mh'$, then~\ref{item:D2} and~\ref{item:D3} imply that any element of $\iota(\mz(\mh))$ commutes with $\mg$, so
\[\im D\subset \mz(\mg).\]
Since $D$ is skew-symmetric, it vanishes on $\mg'$, which is the orthogonal $\mz(\mg)$; finally note that any linear map $D\colon\mg\to\mg$ such that $D|_{\mg'}=0$ and $\im D\subset \mz(\mg)$ is a derivation of $(\mg,g)$. In particular, the single extension $(\mk,k)$ of $\mg$ by $D$ is well defined, with $\mk=\mg\oplus\Span{U}$.

Moreover, $\im D$ is contained in $\mg''$, hence in $\g'$; since $D$ is skew-symmetric, it vanishes on $\mz(\mg)$, the orthogonal of $\mg'$. Therefore, $D$ satisfies the conditions of the last part of Proposition~\ref{prop:single}, so we get~\eqref{eqn:singleexteq} as a consequence of~\eqref{eqn:singleexteq2}.

To show that $\iota\colon \mh\to\mk$ is a mirage of $(\mh,h)$ in $(\mk,k)$, note that~\ref{item:D1} holds by construction of the single extension and the fact that $\iota\colon \mh\to\mg$ is a mirage. Moreover, \ref{item:D2} and~\ref{item:D3} follow from the fact that $D$ is zero on $\iota(\mh')$, and by~\eqref{eqn:singleexteq} we have
\[\mk''=[\mg\oplus\Span{ U},\mg'\oplus\Span{ U}]=\mg''+\im D=\mg'',\]
giving~\ref{item:D4}.

Finally, suppose that $I$ is a nondegenerate ideal in $\mk$ containing $\iota(\mh)$, and choose $X$ in $\iota(\mh)\setminus \ker D$ such that $[X,\mg]\subset\iota(\mh)$. Then for any $Y\in\mg$,
\[[X,Y]_\mk=[X,Y]+g(DX,Y)U=g(DX,Y)U \mod \iota(\mh);\] choosing $Y$ in $\mg$ such that $g(DX,Y)\neq0$, we see that $I$ must contain $U$. Now, $I\cap \mg$ is a nondegenerate ideal of $\mg$. Since $\iota\colon\mh\to\mg$ is a mirage, the orthogonal complement of $I\cap \mg$ in $\mg$ is central in $\mg$. As $\mk$ and $\mg$ have the same center, the orthogonal complement of $I$ in $\mk$ is also central, and \ref{item:D5}~holds as well, so the result follows.
\end{proof}
Given two Lie algebras with an ad-invariant metric $(\mh,h)$ and $(\mg,g)$, we will say that $(\mh,h)$ is a \emph{mirage} in $(\mg,g)$ if $\mh$ is a vector subspace of $\mg$ and the inclusion map $\iota$ is a mirage; then the metric on $\mh$ is the restriction of the metric on $\mg$, but the restriction of the Lie bracket $[\,,\,]_\g$ to $\mh$ will not generally coincide with the Lie bracket $[\,,\,]_\mh$ of $\mh$.

Given a mirage $(\mh,h)$ in $(\mg,g)$,
condition~\ref{item:D1} implies that the restriction of $g$ to $\mh$ is nondeg\-enerate, so we have a vector space splitting
\[\mg=\mh\oplus\mh^\perp.\]
If we  decompose a vector $Y\in\mg$
as $Y=Y_{\mh}+Y_{\mh^\perp}$, conditions~\ref{item:D2} and~\ref{item:D3} can be restated as
\begin{equation}
\label{eq:xyh}
[X,Y]_\mg=[X,Y_{\mh}]_\mh, \quad X\in\mh', Y\in\mg.
\end{equation}

\begin{proposition}
\label{prop:diegoirreducible}
Let $(\mh,h)$ be the $12$-dimensional Lie algebra with the ad-invariant metric of Example~\ref{ex:NilpWeakSolDim12}. Let $\mg$ be a Lie algebra satisfying $\mz(\g)\subset\g'$ with an ad-invariant metric $g$, and suppose that there is a mirage of $(\mh,h)$ in $(\mg,g)$. Then $\mg$ is irreducible.
\end{proposition}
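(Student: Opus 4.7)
I argue by contradiction. Suppose $\mg = \mg_1 \oplus \mg_2$ is a decomposition into two nonzero ideals; by ad-invariance of $g$ this decomposition may be taken to be orthogonal, so each $\mg_i$ is nondegenerate. The plan is to establish that $\iota(\mh) \subset \mg_i$ for some $i \in \{1, 2\}$, and then to invoke \ref{item:D5}. The endgame runs as follows: with $I = \mg_i$ nondegenerate and containing $\iota(\mh)$, \ref{item:D5} gives that $\mg_{3-i} = \mg_i^\perp$ is central; combined with $\mz(\mg)\subset\mg'$ and the decomposition $\mg' = \mg_1' \oplus \mg_2'$, one gets $\mg_{3-i} \subset \mg_{3-i}'$, but $\mg_{3-i}$ is abelian (being central) so $\mg_{3-i}' = 0$, whence $\mg_{3-i} = 0$, contradicting $\mg_{3-i} \neq 0$.

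To achieve the containment $\iota(\mh) \subset \mg_i$, I first extract structural data from the mirage conditions. From \ref{item:D2} and \ref{item:D3}, the subspace $\iota(\mh')$ is actually an ideal of $\mg$: for $Z = \iota(z) + Z^\perp \in \mg$ (with $Z^\perp \in \iota(\mh)^\perp$) and $X \in \mh'$, one has $[Z, \iota(X)] = [\iota(z), \iota(X)] = \iota([z, X]) \in \iota(\mh')$. A direct inspection of the structure equations in Example~\ref{ex:NilpWeakSolDim12} yields $\mh' = \Span{e_3,e_4,e_5,e_6,e_7,e_8,e_9,e_{10},e_{12}}$ and $\mh'' = \mz(\mh) = \Span{e_5, e_6, e_{12}}$, with $h|_{\mh''} = 0$. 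Hence by \ref{item:D4}, $\mg'' = \iota(\mh'')$ is a three-dimensional totally isotropic central subspace of $\mg$. Writing $\pi_i \colon \mg \to \mg_i$ for the orthogonal Lie algebra projections and setting $\iota_i = \pi_i \circ \iota$, the restrictions $\iota_i|_{\mh'}$ are Lie algebra homomorphisms and $\iota_i(\mh')$ is an ideal of $\mg_i$ with derived $\iota_i(\mh'') = \mg_i''$.

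The main obstacle is to promote this structural picture to the containment $\iota(\mh) \subset \mg_i$. My strategy is to exploit the indecomposability of the Lie algebra $\mh'$ itself: its only nonzero brackets are $[e_3, e_4] = e_{12}$, $[e_3, e_9] = -e_5$, $[e_4, e_7] = -e_5$, and $[e_4, e_{10}] = -e_6$, so the associated ``linking graph'' on the generators $\bar e_3, \bar e_4, \bar e_7, \bar e_9, \bar e_{10}$ of $\mh'/\mz(\mh')$ is connected through the vertex $\bar e_4$, forbidding any non-trivial Lie algebra decomposition of $\mh'$. Since $\ker\iota_1|_{\mh'}$ and $\ker\iota_2|_{\mh'}$ are ideals of $\mh'$ with trivial intersection (by injectivity of $\iota$), a Goursat-type analysis of the subdirect product $\mh' \hookrightarrow \iota_1(\mh') \oplus \iota_2(\mh') \subset \mg_1 \oplus \mg_2$, refined by the ideal condition $[\mg_j, \iota(\mh')] \subset \iota(\mh') \cap \mg_j$, is expected to force one of these kernels to equal all of $\mh'$, yielding $\iota(\mh') \subset \mg_i$. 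Promoting this to $\iota(\mh) \subset \mg_i$ then uses the explicit brackets between $\mh \setminus \mh' = \Span{e_1, e_2, e_{11}}$ and $\mh'$ (e.g.\ $[e_1, e_4] = -e_3$, $[e_1, e_{10}] = e_{12}$) together with \ref{item:D5} applied to the minimal nondegenerate ideal of $\mg$ containing $\iota(\mh)$. This final extension argument, threading together the indecomposability of $\mh'$ with the subdirect-product/ideal constraints, is where the principal technical difficulty of the proof lies.
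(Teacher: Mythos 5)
Your overall frame---orthogonal decomposition into nondegenerate ideals, and the endgame that derives a contradiction from \ref{item:D5} together with $\mz(\g)\subset\g'$---is exactly the paper's, and that part is sound. The gap is in the middle, which is precisely the part you defer: the containment $\iota(\mh)\subset\mg_i$ is never actually established, only announced as what a Goursat-type analysis ``is expected to force.'' Worse, the one concrete structural input you propose to feed into that analysis is false: $\mh'$ \emph{is} decomposable as a Lie algebra. The element $e_8$ is central in $\mh'$ (its only nonzero brackets in $\mh$ are with $e_1,e_2\notin\mh'$) but does not lie in $[\mh',\mh']=\Span{e_5,e_6,e_{12}}$, so $\Span{e_8}$ splits off as an abelian direct factor of $\mh'$; your linking graph misses this because you only drew vertices for $\bar e_3,\bar e_4,\bar e_7,\bar e_9,\bar e_{10}$. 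And even granting indecomposability, the two kernels $\ker\iota_1|_{\mh'}$, $\ker\iota_2|_{\mh'}$ having trivial intersection does not force one of them to be all of $\mh'$: both could be zero (the diagonal embedding), which is exactly the case one must rule out, and for that you need the metric and the specific brackets, not just ideal combinatorics. The paper closes this step by brute force: it computes the subspaces $[e_3,\mg],[e_4,\mg],[e_7,\mg],\dotsc$ via \ref{item:D2}--\ref{item:D3} and runs a chain of orthogonality deductions ($e_5\in\mk_1\Rightarrow\mk_2\perp e_5\Rightarrow$ no $e_1$-component $\Rightarrow\dotsb$) until $\mk_1\supset\mg''$ and $\mk_2\subset\mh^\perp$.

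A second, smaller problem: you read $\mh''$ as the second derived algebra, but in this paper $\g''$ denotes $[\g,\g']$ (see the identity $\mk''=[\mk,\mk']$ in the proof of Lemma~\ref{lemma:singleextgpp}, and the identification $\mg''=\Span{e_3,e_5,e_6,e_7,e_8,e_9,e_{10},e_{12}}$ in the paper's own proof of this proposition). So \ref{item:D4} hands you the $8$-dimensional $\iota([\mh,\mh'])$, not a ``three-dimensional totally isotropic central subspace''; the fact $[\mh',\mh']=\mz(\mh)$ is true but is not what \ref{item:D4} asserts, and your deduction that $[\mg',\mg']=\iota([\mh',\mh'])$ is unjustified from the mirage axioms as stated. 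In short: the skeleton is right, but the load-bearing step is missing and the substitute you sketch for it rests on an incorrect indecomposability claim.
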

\begin{proof}
Recall that $\mh$ has structure constants
\begin{multline*}
(0,0,e^{14},e^{12},e^{47}+e^{28}+e^{39}+e^{10,11},e^{18}+e^{4,10}-e^{3,11}-e^{9,11},\\
- e^{19}-e^{2,11}+e^{4,11},-e^{17}+e^{2,10}-e^{3,11},e^{13}-e^{2,11},e^{24}+e^{1,11},0,-e^{1,10}-e^{23}-e^{29}-e^{34}),
\end{multline*}
and the metric is
\[h=-e^1\odot e^5+e^2\odot e^6-e^3\odot e^7-e^4\odot e^8-e^9\otimes e^{9}+e^{10}\otimes e^{10}+e^{11}\odot e^{12}.\]
We shall denote $\{e_1,\ldots, e_{12}\}$ the basis of $\mh$ dual to $\{e^i,\ldots, e^{12}\}$, the one above giving the structure constants.

Suppose that $\mg$ decomposes as the sum of two ideals, $\mg=\mk_1\oplus \mk_2$. We can assume that $\mk_1$ and $\mk_2$ are orthogonal relative to the ad-invariant metric $g$ (see \cite[Lemma~3.2]{ZhZh01}).

By~\ref{item:D4}, we have $\mh''=\mg''=\mk_1''\oplus\mk_2''$. Since $(\mh'')'=\Span{e_5}$ is one-dimensional, we can assume that $\mk_1''$ contains $e_5$ and $\mk_2''=0$.

Using~\ref{item:D2} we compute:
\begin{align*}
[e_3,\mg]&=\Span{e_5,e_6+e_8,e_9,e_{12}}, &
  [e_4,\mg]&=\Span{e_3,e_5,e_6,e_7,e_{10},e_{12}},\\
  [e_7,\mg]&=\Span{e_5,e_8}, &
  [e_8,\mg]&=\Span{e_5,e_6}, \\
  [e_9,\mg]&=\Span{e_5,e_6,e_7,e_{12}}, &
  [e_{10},\mg]&=\Span{e_5,e_6,e_8,e_{12}}.
\end{align*}
Since $\mk_1$ contains $e_5$, it follows that $\mk_2$ is orthogonal to $e_5$. Thus, writing the generic element of $\mk_2$ as $a_1e_1+\dots + a_{12}e_{12}+v$ with $a_i\in\R$, $v\in\mh^\perp$, the component $a_1$ is zero. By~\ref{item:D2} and~\ref{item:D3}, it follows that
\[
  [e_7,\mk_2]\subset \Span{e_5}\cap \mk_2=0,\quad
  [e_8,\mk_2]\subset \Span{e_5}\cap \mk_2=0, \\
\]
Since $\mg=\mk_1\oplus\mk_2$, this implies
\[[e_7,\mk_1]=[e_7,\mg], \quad [e_8,\mk_1]=[e_8,\mg],\]
i.e. $\mk_1$ contains $e_5,e_6$ and $e_8$.
Since $\mk_2$ is orthogonal to $\mk_1$, its elements have no component along $e_1,e_2$ and $e_4$, so
\begin{gather*}
[e_3,\mk_2]\subset\Span{e_5,e_6+e_8},\quad [e_9,\mk_2]\subset\Span{e_5,e_6}, \quad
  [e_{10},\mk_2]\subset \Span{e_5}.
\end{gather*}
This implies that $\mk_2$ commutes with $e_3$, $e_9$ and $e_{10}$, so its elements have no component along $e_3, e_9, e_{11}$, i.e. $\mk_2$ is orthogonal to $e_7, e_9,e_{12}$, which must belong to $\mk_1$.

Then $[e_4,\mk_2]\subset\Span{e_5,e_6}$,
 i.e. $\mk_2$ also commutes with $e_4$, showing that its elements have no component along $e_7,e_{10}$; consequently, $\mk_1$ contains $e_3$ and $e_{10}$. Summing up,
\[\mk_1\supset \Span{e_3,e_5,e_6,e_7,e_8,e_9,e_{10},e_{12}}=\mh''.\]
Therefore $\mk_2=\mk_1^\perp$ is contained in $\Span{e_5,e_6,e_8,e_{12}}\oplus\mh^\perp$. It follows that the projection $\pi^\perp\colon\g\to\mh^\perp$ is injective on $\mk_2$, and maps $\mk_2\subset\mh'\oplus\mh^\perp$ isomorphically on a nondegenerate ideal of $\mg$. By condition~\ref{item:D5}, $\pi^\perp(\mk_2)$ is central. Since the center of $\mg$ is contained in $\mg'$, the metric on $\pi^\perp(\mk_2)$ is zero, so $\mk_2$ is trivial.
\end{proof}

Let $\{v_1,\dotsc, v_n, w_1,\dotsc, w_n\}$ be a basis of $\R^{2n}$, with dual basis $\{ v^1,\dotsc,  v^n, w^1,\dotsc, w^n\}$, and consider the neutral metric
\begin{equation}
\label{eq:neutral}
\bil =  v^1\odot  w^1+\dots +  v^n\odot  w^n.
\end{equation}
For $n=1$, we will write $\{\hat v,\hat w\}$ instead of $\{v_1,w_1\}$.

Fixing $(\mh,h)$ as in Proposition~\ref{prop:diegoirreducible} (and using the notation therein), for any $n\geq 0$ we consider the Lie algebra $\mh\oplus\R^{4n}$, together with the ad-invariant metric defined by $h$ and $\bil$ as in~\eqref{eq:neutral}, being the factors orthogonal. The following linear map defines a skew-symmetric derivation
\[D_{4n}\colon \mh\oplus \R^{4n}\to \mh\oplus \R^{4n},\quad
 e_1\mapsto e_6,\quad e_2\mapsto e_5,\quad v_{2i}\mapsto w_{2i-1},\quad  v_{2i-1}\mapsto-w_{2i}.
\]
Similarly, on $\mh\oplus\R^2\oplus\R^{4n}$ endowed with the product metric we have a skew-symmetric derivation
\begin{gather*}
D_{4n+2}\colon \mh\oplus \R^2\oplus \R^{4n}\to \mh\oplus \R^2\oplus \R^{4n},\\
e_1\mapsto e_6+\hat w,\quad e_2\mapsto e_5,\quad v_{2i} \mapsto w_{2i-1},\quad  v_{2i-1}\mapsto -w_{2i},\quad \hat{v}\mapsto e_5.
\end{gather*}
Notice that for any $n\geq 0$ and $l=0,2$,  $D_{4n+l}$ vanishes on $\mh'$, $D_{4n+l}^2=0$ and $\im D_{4n+l}\subset \mz(\mh)\oplus\mh^\bot$ (see~\eqref{eq:scdh}).
\begin{theorem}
\label{thm:12andhigher}
Let $(\mh,h)$ be as in Proposition~\ref{prop:diegoirreducible}, and let $f\colon \mh\to\mh$ be a linear map satisfying
\[f(e_1)=e_{12},\ f(e_{11})=e_5,\ f(e_i)=0,\quad i\neq 1,11.\]
Let $\mg_{13}$ be the single extension of $\mh$ by $f$ and for $n\geq0$:
\begin{itemize}
\item let $\mg_{14+4n}$ be the double extensions of $\mh\oplus\R^{4n}$ by $D_{4n}$;
\item let $\mg_{16+4n}$ be the double extensions of $\mh\oplus\R^2\oplus\R^{4n}$ by $D_{4n+2}$;
\item let $\mg_{15+4n}$ be the single extension of $\mg_{14+4n}$ by $f$ (extended to zero on $\mh^\perp$);
\item let $\mg_{17+4n}$ be the single extension of $\mg_{16+4n}$ by $f$ (extended to zero on $\mh^\perp$).
\end{itemize}
Then, for each $k\geq 13$, $\mg_k$ is a nonnice irreducible Lie algebra of dimension $k$ with an ad-invariant metric.
\end{theorem}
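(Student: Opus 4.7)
The plan is to verify, for each Lie algebra $\mg_k$ in the statement, three properties: (i) it is a Lie algebra of dimension $k$ with an ad-invariant metric; (ii) it is irreducible; (iii) it is not nice.

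\emph{Well-definedness and ad-invariant metric.} Each $\mg_k$ is constructed from $(\mh,h)$ by one or two extensions. The ad-invariant metric and Lie algebra structure are produced by the Medina--Revoy construction and Lemma~\ref{lemma:single}, once the relevant map ($D_{4n}$, $D_{4n+2}$, or $f$) is verified to satisfy the required hypotheses. The maps $D_{4n}$ and $D_{4n+2}$ are skew-symmetric by direct pairing computations against the neutral form and $h$, and they are derivations because they act nonzero only on $e_1, e_2$ in $\mh$ (which sit outside $\mh'$) and on the $v_i$'s (whose images $\pm w_j$ pair to zero with the required brackets). For the single extensions, the hypotheses of Lemma~\ref{lemma:singleextgpp} must be verified for $f$: rank two and skew-symmetric by inspection; $f|_{\iota(\mh')}=0$ because $e_1, e_{11} \notin \mh'$; $\im f = \Span{e_5, e_{12}} \subset \mh'' = \mg''$ (by (M4) for the base, established below); $f$ noninner by a rank comparison with inner derivations $\ad Y$; and finally the noncontainment $fX \notin [X,\iota(\mh)] + \iota(\mh)^\perp$, for which a witness is $X = e_1 + c\,e_{11}$ with $c \neq 0$, verified by a direct computation of the subspace $[X,\mh]_\mh$.

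\emph{Irreducibility.} The strategy is to exhibit $(\mh,h)$ as a mirage in each $\mg_k$ and to verify $\mz(\mg_k) \subset \mg_k'$, so that Proposition~\ref{prop:diegoirreducible} applies. For the base double extensions, (M1) is immediate; (M2) and (M3) follow from $D|_{\mh'} = 0$; (M4) is obtained from the computation $\mg' = \mh' \oplus \Span{z, w_1, \dots, w_{2n}}$ (with $\hat w$ added in the relevant case), which forces $\mg'' = \mh''$. Condition (M5) is the delicate one: for a nondegenerate ideal $I \supset \iota(\mh)$, the orthogonal $I^\perp \subset \mh^\perp$ is itself an ideal of $\mg$, and the constraint $[\mh, I^\perp] \subset I^\perp$ excludes a nonzero $e$-component (since $[\mh, e] \subset \mh$ would force nontrivial intersection with $\mh$), while $\ad e$-invariance and the structure of $D$ push out any nontrivial $v_i$-component, leaving $I^\perp \subset \Span{z, w_1, \dots, w_{2n}}$ (plus $\hat w$), which is central. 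For the single extensions, the mirage propagates by Lemma~\ref{lemma:singleextgpp}, and the identities $\mk' = \mg' \oplus \Span{U}$ and $\mz(\mk)=\mz(\mg)$ in its conclusion preserve $\mz(\mg_k) \subset \mg_k'$. Proposition~\ref{prop:diegoirreducible} then yields irreducibility.

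\emph{Nonniceness.} The plan is uniform: the quotient $\mg_k/\mz(\mg_k)$ is isomorphic to $\mn \oplus \R^r$ for some $r \geq 0$, where $\mn$ is the $9$-dimensional nonnice Lie algebra of Lemma~\ref{lem:Center12DimNNnice}. The identification follows by computing $\mz(\mg_k)$, which contains $\mz(\mh) = \Span{e_5, e_6, e_{12}}$ together with the new abelian generators $z, w_1, \dots, w_{2n}$ (and $\hat w$ when applicable), and by noting that the classes of $e, v_i, \hat v$ (and of $U$ in the single-extension case) all become central in the quotient, their nontrivial brackets landing inside $\mz(\mg_k)$. If $\mg_k$ were nice, Lemma~\ref{lemma:UCS} would force niceness of $\mg_k/\mz(\mg_k) = \mn \oplus \R^r$; the projection argument used in Proposition~\ref{prop:existreducible} then extracts a nice basis of $\mn$, using that $\mz(\mn) = \Span{e_6} \subset \mn'$ guarantees $\mn$ has no abelian direct summand, contradicting Lemma~\ref{lem:Center12DimNNnice}.

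The main obstacle is verifying the mirage condition (M5) for the base double extensions; once (M5) is established there, Lemma~\ref{lemma:singleextgpp} handles the single extensions automatically, and the nonniceness argument reduces to a uniform quotient computation.
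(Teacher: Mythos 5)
Your overall architecture matches the paper's: exhibit $\mh$ as a mirage in each $\mg_k$ (verifying (M1)--(M5) directly for the even-dimensional double extensions and propagating via Lemma~\ref{lemma:singleextgpp} for the odd ones), apply Proposition~\ref{prop:diegoirreducible} for irreducibility, and derive nonniceness from the quotient by the center via Lemma~\ref{lemma:UCS}. The even-dimensional half of your argument is essentially the paper's proof.

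There is, however, a genuine gap in the nonniceness argument for the odd-dimensional cases $\mg_{13+4n}$ and $\mg_{15+4n}$. You claim uniformly that $\mg_k/\mz(\mg_k)\cong\mn\oplus\R^r$, on the grounds that the class of $U$ becomes central in the quotient because $[U,\cdot]=f(\cdot)$ lands in $\mz(\mg_k)$. The class of $\bar U$ is indeed central, but centrality does not make it an abelian \emph{direct factor}: the single-extension bracket gives $[e_1,e_{11}]_{\mg_k}=[e_1,e_{11}]_{\mg_{k-1}}+h(f(e_1),e_{11})\,U=[e_1,e_{11}]_\mh+U$ (since $h(e_{12},e_{11})=1$), so $\bar U$ lies in the derived algebra of the quotient and cannot be split off. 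The quotient is therefore the \emph{nontrivial} central extension of $\mn$ by the $2$-cocycle $e^1\wedge e^{11}$ (times an abelian factor), i.e.\ $\tilde{\lie n}\oplus\R^r$ with $\tilde{\lie n}$ the $10$-dimensional Lie algebra of Lemma~\ref{lem:Center12DimNNnice}; one checks the corresponding cocycle is not exact in $\mn$, so $\tilde{\lie n}\not\cong\mn\oplus\R$. Establishing that $\tilde{\lie n}$ is nonnice is a separate, nontrivial step --- the paper needs a computer-assisted classification of $10$-dimensional nice Lie algebras with the relevant UCS/LCS dimensions together with the vanishing of the Nikolayevsky derivation --- and your proof does not close without invoking that second half of Lemma~\ref{lem:Center12DimNNnice}.
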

\begin{proof}
We first assume that $k$ is even, and show that $\mh$ is a mirage in $\mg_{k}$. Along the proof we will denote by $D$ the derivation $D_{k-14}$.

The double extension of $\mh\oplus \R^{4n}$ and $\mh\oplus \R^2\oplus \R^{4n}$ by $D$ defines a metric and a Lie algebra structure on
the vector space $\mg_k=\mh\oplus \mh^\perp$, where the metric restricted to $\mh$ coincides with $h$ and with $\mh^\perp$ equal to either $\R^{4n}\oplus \Span{e,z}$ or $(\R^2\oplus \R^{4n})\oplus\Span{e,z}$. Thus,~\ref{item:D1} is trivially satisfied.

The nonzero Lie brackets in $\mg_k$ are as follows:
\begin{align}
[X,Y]&=[X,Y]_\mh + h(DX,Y)z, && X,Y\in\mh;\label{b1}\\
[X,v]&=h(DX,v)z, && X\in\mh, v\in \R^{4n}, v\in\R^2\oplus \R^{4n};\label{b2}\\
[v,w]&=h(Dv,w)z,&& v,w\in \R^{4n}, v,w\in\R^2\oplus \R^{4n};\label{b3}\\
[e,X]&=DX,&&  X\in\mh;\label{b4}\\
[e,v]&=Dv, && v\in \R^{4n}, v\in\R^2\oplus \R^{4n}.\label{b5}
\end{align}
Conditions~\ref{item:D2} and~\ref{item:D3} follow from \eqref{b1}, \eqref{b2}, \eqref{b4}, and the fact that $D|_{\mh'}=0$.

By \eqref{b1}--\eqref{b5}, it is clear that
\[\mg_k'+\Span{z} = \mh' + \im D+\Span{z}.\]
This implies that $\mg_k''=[\mg_k,\mh']+[\mg_k,\im D]$. By~\ref{item:D2} and~\ref{item:D3}, $[\mg_k,\mh']=[\mh,\mh']=\mh''$. Moreover, $[\mg_k,\im D]=0$ since $\im D\subset \mz(\mh)\oplus \mh^\bot$ and $D^2=0$.
Therefore $\mg_k''=\mh''$ and thus~\ref{item:D4} is satisfied.

Now let $I$ be a nondegenerate ideal of $\mg_k$ containing $\mh$. Then $I$ contains $z$ by \eqref{b2}, so $I^\bot$ is an ideal contained in $\R^{4n}\oplus\Span{z}$, or $\R^2\oplus\R^{4n}\oplus\Span{z}$. In addition  $I^\perp$ does not contain $z$ because $I\cap I^\bot=0$ by nondegeneracy. Hence, given any $v+az\in I^\perp$ with $v\in \R^2\oplus\R^{4n}$ or $\R^{4n}$, by \eqref{b2}, \eqref{b3}, we get $[X,v]=0=[v,w]$ and thus $g(Dv,X)=g(Dv,w)=0$, for all $X\in\mh$, $w\in \R^{4n}$ or $w\in\R^2\oplus\R^{4n}$. Therefore, $I^\perp \subset\ker D+\Span{z}$. Thus, $I^\perp$ is abelian, giving~\ref{item:D5}.

This shows that the inclusion of $\mh$ in $\mg_k$ is a mirage, which by Proposition~\ref{prop:diegoirreducible} implies that $\mg_{k}$ is irreducible.
To show that $\mg_k$ is nonnice, first notice that
\[
\mz(\mg_k)=(\mz(\mh)+\im D)\oplus \Span{z}.
\]This implies that $\mg_k/\mz(\mg_k)$ is isomorphic to a direct sum of ideals, one of which is an abelian factor and the other one is isomorphic to the nonnice 9-dimensional Lie algebra $\mh/\mz(\mh)=\lie{n}$ in Lemma~\ref{lem:Center12DimNNnice}. By Proposition~\ref{lemma:UCS} we get that $\mg_k$ is also nonnice. This proves the part of the statement for $k$ even.

For $k$ odd, let $\mg_k$ be the single extension of $\mg_{k-1}$ by $f$, where $\mg_{12}=\mh$. We observe that $f$ is a noninner derivation of $\mg_{k-1}$, skew-symmetric, zero on $\mh'\supset \mz(\mh)$, and its image is contained in $\mh''=\mg_{k-1}''$. Moreover, for $X=e_{11}$ we have
\[[X,\mg_{k-1}]\subset\iota(\mh), \quad fX\neq0.\]
Thus, Lemma~\ref{lemma:singleextgpp} applies and $\mh$ is a mirage in $\mg_k$, and thus $\mg_k$ is irreducible. In addition, $\mg_k/\mz(\mg_k)$ is isomorphic to an abelian factor plus the  central extension of $\mh/\mz(\mh)$
by the two-cocycle $e^1\wedge e^{11}$; the latter is isomorphic to the $10$-dimensional Lie algebra $\tilde{\lie{n}}$ which was shown to be nonnice in Lemma~\ref{lem:Center12DimNNnice}. Again, $\mg_k$ is nonnice by Proposition~\ref{lemma:UCS}.
\end{proof}

\section{Cotangents of free nilpotent Lie algebras}
\label{sec:free}

In this section we show that the cotangent of any free nilpotent Lie algebra of step $\geq 3$ is irreducible and not nice. This shows that for every $s\geq 3$, there exists a nonnice irreducible Lie algebra of nilpotency index $s$ carrying an ad-invariant metric.

We start by introducing general sufficient conditions for a given Lie algebra to be irreducible and not nice. These results will be later applied to free nilpotent Lie algebras and their cotangents.

Recall that when the Lie algebra $\mg$ is nice, then the Nikolayevski derivation can be assumed to be diagonal relative to the nice basis (see~\cite{Payne:Applications}); in other words, $\mg$ admits a nice basis constituted by eigenvectors of the Nikolayevski derivation. In particular, this fact allows to  provide criteria for the niceness of a given Lie algebra, as the following result shows.

\begin{lemma}
\label{lemma:freenotnice}
Let $\g$ be a Lie algebra with Nikolayevski derivation $N$, and let $W$ be an eigenspace of $N$ of dimension  $m$. If $\mg$ is nice, then
\begin{equation*}\label{eq:Wm}
\dim [W,[W,W]]\leq \dfrac{m (-4 + 3 m + m^2)}6.
\end{equation*}
\end{lemma}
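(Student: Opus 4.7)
The plan is to exploit the already-cited fact (due to Payne) that, since $\mg$ is nice, there is a nice basis $\{e_1,\dotsc, e_n\}$ of $\mg$ consisting of eigenvectors of $N$. The $m$-dimensional eigenspace $W$ is therefore spanned by a subset of this basis, which after relabeling I take to be $\{e_1,\dotsc, e_m\}$. Consequently, $[W,[W,W]]$ is generated as a vector space by the triple brackets $[e_a,[e_b,e_c]]$ with $a\in\{1,\dotsc,m\}$ and $1\le b<c\le m$. The fundamental observation I will use is that, by niceness, each of these triple brackets is either zero or a scalar multiple of a single element of the nice basis.

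Next I would partition these $m\binom{m}{2}$ spanning vectors into two families depending on whether $a\in\{b,c\}$ or not. The \emph{repeated} family consists of the $m(m-1)=2\binom{m}{2}$ brackets of the form $[e_a,[e_a,e_b]]$ with $a\neq b$, and trivially contributes at most $m(m-1)$ to $\dim[W,[W,W]]$. The \emph{generic} family consists of the $3\binom{m}{3}$ brackets with $\{a,b,c\}$ pairwise distinct, and these naturally organize themselves into $\binom{m}{3}$ groups indexed by unordered triples.

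The crux of the argument --- and the step which genuinely uses niceness --- will be the claim that, for each unordered triple $\{a,b,c\}$ of three distinct indices, the three generic brackets $[e_a,[e_b,e_c]]$, $[e_b,[e_c,e_a]]$, $[e_c,[e_a,e_b]]$ span a subspace of $[W,[W,W]]$ of dimension at most $1$. Indeed, the Jacobi identity forces them to sum to zero, and a short case analysis --- each summand is either zero or a nonzero scalar multiple of a single basis vector $e_{p_i}$ --- closes the matter: exactly one summand nonzero is impossible; exactly two nonzero forces them to be proportional to the same basis element; and all three nonzero forces all three basis indices $p_1,p_2,p_3$ to coincide, since a nontrivial linear relation among distinct basis vectors cannot have all coefficients nonzero. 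I expect this tidy-but-careful case analysis to be the main obstacle, in that one must keep track of dimensions of spans rather than cardinalities of spanning sets.

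Assembling the two contributions then gives
\[
\dim[W,[W,W]]\le m(m-1)+\binom{m}{3}=\frac{m(m-1)(m+4)}{6}=\frac{m(-4+3m+m^2)}{6},
\]
which is precisely the required inequality.
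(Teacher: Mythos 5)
Your proposal is correct and follows essentially the same route as the paper: reduce to a nice basis of $N$-eigenvectors spanning $W$, bound the brackets with a repeated index by $m(m-1)$, and use the Jacobi identity together with the nice condition to show that the three brackets attached to each unordered triple of distinct indices span at most a line, giving $m(m-1)+\binom{m}{3}$. The paper states the one-dimensionality claim without the explicit case analysis you supply, so your write-up is simply a more detailed version of the same argument.
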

\begin{proof}
We argue as in \cite[Example~4]{Nikolayevsky}. Since $N$ is the Nikolayevsky derivation, we can assume that $W$ is spanned by elements of the nice basis $\{X_1,\dotsc, X_m\}$. It follows that
\[[[W,W],W]=\Span{[[X_i,X_j],X_k]\st 1\leq i,j,k\leq m}.\]
By the Jacobi identity, if $i$, $j$ and $k$ are distinct, then the elements
\[[[X_i,X_j],X_k],\ [[X_j,X_k],X_i],\ [[X_k,X_i],X_j]\]
are linearly dependent; since each triple bracket is a multiple of an element of the nice basis, they are all three contained in a one-dimensional subspace. In addition, $i$ and $j$ can be assumed to be distinct in $ [[X_i,X_j],X_k]$.

Therefore,
\[\dim [[W,W],W]\leq m(m-1)+\binom{m}3=\frac{m (-4 + 3 m + m^2)}6,\]
as we wanted to prove.
\end{proof}

Given a  Lie algebra $\mg$, its cotangent Lie algebra $T^*\g$ is defined as a semidirect product $\g\ltimes\g^*$, where $\mg^*$ is an abelian ideal, and the action of $\mg$ on $\mg^*$ is given by
\[[X,\alpha]=\ad_X^*\alpha=-\alpha\circ\ad_X, \quad\text{ for all }X\in \mg,\; \alpha\in \mg^*.\]
Any cotangent Lie algebra has a canonical ad-invariant metric $g$ induced by the pairing of $\g$ with $\g^*$, namely
\begin{equation}\label{eqn:CanonicalCotangentMetric}
g(X+\alpha,Y+\beta)=\alpha(X)+\beta(Y),\quad\text{ for all }X,Y\in \mg,\; \alpha,\beta\in \mg^*.
\end{equation}

For any endomorphism $S\colon T^*\mg\to T^*\mg$, we denote by $S^*$ its metric adjoint with respect to $g$. In particular, if $P\colon T^*\mg\to T^*\mg$ denotes the projection onto $\mg^*$, it is easy to show that $P+P^*=\id$.

Finally, notice that every linear map $T\colon\mg\to \mg$ defines an endomorphism of $T^*\mg$, which we also denote by $T$,  by extending it by zero on $\mg^*$.

\begin{lemma}
\label{lemma:nonnice}
Let $\g$ be a nonnice nilpotent Lie algebra with Nikolayevsky derivation $ N$. If, for every eigenvalue $\lambda$ of $N$, $2-\lambda$ is not an eigenvalue,  then  $T^*\g$ is not nice.
\end{lemma}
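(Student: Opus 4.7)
The plan is to construct a semisimple derivation $\tilde N$ of $T^*\g$ whose eigenspaces are adapted to the splitting $T^*\g = \g \oplus \g^*$, and then use the niceness of $T^*\g$ to extract a nice basis of $\g$, contradicting the hypothesis. First I would define $\tilde N\colon T^*\g \to T^*\g$ by $\tilde N|_\g = N$ and $\tilde N|_{\g^*} = 2\id - N^*$, where $N^*\colon\g^*\to\g^*$ is the transpose $(N^*\alpha)(v)=\alpha(Nv)$. A short computation verifies that $\tilde N$ is a derivation of $T^*\g$: on pure brackets $[X,Y]_\g$ it reduces to $N\in\Der(\g)$, while for mixed brackets $[X,\alpha]=\ad^*_X\alpha$ the derivation identity becomes $\ad^*_X N^*\alpha - N^*\ad^*_X\alpha = \ad^*_{NX}\alpha$, which is the dual statement of $N\ad_X - \ad_X N = \ad_{NX}$. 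By construction, the eigenvalues of $\tilde N$ on $\g$ are the eigenvalues $\lambda$ of $N$, while those on $\g^*$ are $2-\lambda$; the hypothesis forces these two spectra to be disjoint, so every eigenspace of $\tilde N$ lies entirely in $\g$ or entirely in $\g^*$.

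Now suppose for contradiction that $T^*\g$ is nice. Using \cite{Payne:Applications} combined with the conjugacy of maximal tori in $\Der(T^*\g)$ (which is needed because $\tilde N$ is semisimple but not necessarily the Nikolayevski derivation of $T^*\g$), there is a nice basis $\{E_1,\dotsc,E_{2n}\}$ of $T^*\g$, with $n=\dim\g$, in which $\tilde N$ is diagonal. Each $E_a$ is then an eigenvector of $\tilde N$ and therefore lies in $\g$ or in $\g^*$; counting eigenvalues of $\tilde N$ with multiplicity, the set $B_\g := \{E_a\mid E_a\in\g\}$ is a basis of $\g$.

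It remains to verify that $B_\g$ is a nice basis of $\g$. For $X,Y\in B_\g$, the bracket $[X,Y]_\g=[X,Y]_{T^*\g}$ is a scalar multiple of some $E_c$ in the nice basis, and since it lies in $\g$, necessarily $E_c\in B_\g$. For the dual condition, since $\g^*$ is abelian in $T^*\g$ and $[\g,\g^*]\subset\g^*$, the Chevalley-Eilenberg differential $d_{T^*\g}E^b$ of any $E^b\in\g^*\subset(T^*\g)^*$ vanishes on components involving $\g^*$ and restricts to $d_\g E^b$ on $\g\wedge\g$. Hence for $X\in B_\g$ and $E_b\in B_\g$, with dual $E^b\in\g^*$, we get $X\hook d_{T^*\g}E^b = X\hook d_\g E^b\in\g^*$, and the nice condition for $T^*\g$ forces this to be a multiple of some dual basis element $E^c$, which must then lie in $\g^*$, making $E_c\in B_\g$. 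Thus $\g$ would admit a nice basis, contradicting the hypothesis. The delicate step in this plan is the diagonalization of $\tilde N$ in a nice basis, since $\tilde N$ need not be the Nikolayevski derivation of $T^*\g$; the rest consists of routine checks once the eigenspace splitting is established.
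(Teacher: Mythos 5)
Your construction of $\tilde N$ and the eigenvalue analysis are exactly the paper's: the operator $N-N^*+2P$ (with $P$ the projection onto $\g^*$) is precisely, up to a positive scalar, the derivation the paper works with, and your verification that the spectral hypothesis forces $\g$ and $\g^*$ to be $\tilde N$\dash invariant, as well as the final check that a nice basis of $T^*\g$ made of $\tilde N$\dash eigenvectors restricts to a nice basis of $\g$, are correct. The gap is exactly the step you flag as delicate. The result of \cite{Payne:Applications} asserts that a nice Lie algebra admits a nice basis diagonalizing the \emph{Nikolayevsky} derivation; it is not a statement about arbitrary semisimple derivations with real eigenvalues, and ``conjugacy of maximal tori'' does not repair this. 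The torus of derivations that are diagonal with respect to a given nice basis need not be a maximal (split) torus of $\Der(T^*\g)$; so even granting conjugacy of maximal split tori in the (possibly non-reductive, possibly disconnected) automorphism group, conjugating $\tilde N$ into a maximal torus containing the diagonal one only yields a common eigenbasis, which need not be nice. Payne's argument is special to the Nikolayevsky derivation --- one solves the trace equation $\Tr(ND)=\Tr(D)$ inside the diagonal torus and invokes uniqueness up to automorphism --- and does not extend to the general principle your step requires.

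The repair is that your $\tilde N$ \emph{is} a positive multiple of the Nikolayevsky derivation of $T^*\g$: the paper quotes $\tilde N=a(N-N^*+2P)$ with $0<a<1$ from \cite[Proposition~5.20]{ContiDelBarcoRossi:Uniqueness}. With that identification, the diagonalizability in a nice basis follows directly from \cite{Payne:Applications} and the rest of your argument closes. As written, however, the diagonalization of $\tilde N$ in a nice basis is unjustified, and this is the load-bearing step of the proof; you should either cite the computation of the Nikolayevsky derivation of a cotangent algebra or verify the trace identity $\Tr(\tilde N D)=c\,\Tr(D)$ for all $D\in\Der(T^*\g)$ yourself.
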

\begin{proof}
Let $N$ be the Nikolayevsky derivation of $\g$. Recall from~\cite[Proposition~5.20]{ContiDelBarcoRossi:Uniqueness} that $T^*\g$ has Nikolayevsky derivation
\begin{equation}\label{eq:tildeN}
\tilde N=a( N- N^*+2P),
\end{equation}
where $a$ is a constant, $0<a<1$.

Let $\lambda_1,\dotsc, \lambda_n$ denote the distinct eigenvalues of $\tilde N$ and let $X+\alpha$ be an eigenvector of $\tilde N$, associated to the eigenvalue $\mu$. Equation~\eqref{eq:tildeN} implies
\[
\mu X=aNX \qquad \text{ and }\qquad \mu\alpha =a(2\id-N^*)\alpha.
\]
If $X$ and $\alpha$ are both nonzero, one gets that $\mu/a=\lambda_j=(2-\lambda_k)$ is an eigenvalue of $N$, for some $j,k=1,\ldots, n$, contradicting the hypothesis. Therefore, every eigenvector belongs to either $\mg$ or $\mg^*$, and thus $\mg$ and $\mg^*$ are invariant by $\tilde N$.

Assume for a contradiction that $T^*\g$ is nice. Up to an automorphism, we may assume that $\tilde N$ is diagonal relative to the nice basis. In particular, the subalgebra $\mg$ is spanned by a subset of such nice basis, itself forming a nice basis of $\mg$, which is absurd.
\end{proof}

In order to state conditions that guarantee irreducibility of graded Lie algebras, we introduce some notation. Given a Lie algebra $\g$ and a subspace $U$, we denote by $C(U)=\{X\in\lie \g\mid (\ad X)|_U=0\}$ the centralizer of $U$ in $\g$, which is not in general an ideal.
\begin{lemma}
 \label{lemma:irreducible}
Let $\g=\bigoplus_{k=1}^r \mg_k$ be a positively graded Lie algebra such that
\begin{enumerate}
\item \label{it:vzv} the only subspaces $V\subseteq \mg_1$ such that $\mg_1\subseteq V+C(V)$ are $V=0$ and $V=\mg_1$,
\item \label{it:zv} if $\mg_k\nsubseteq \mz(\mg)$, the only subspace $V\subset \mg_k$ such that $[V,\mg_1]=0$ is $V=0$,
\item \label{it:gncomm} $\mz(\mg)\subseteq \mg'$.
\end{enumerate}
Then $\g$ is irreducible.
\end{lemma}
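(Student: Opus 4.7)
The plan is to argue by contradiction. Suppose $\g=\mk_1\oplus\mk_2$ with both $\mk_i$ nonzero ideals; I will derive $\mk_1=0$. Writing $\pi_k\colon\g\to\g_k$ for the projection onto the degree-$k$ component and $V_i=\pi_1(\mk_i)$, the key observation is $[\mk_1,\mk_2]=0$: taking the degree-$2$ part of $[X,Y]=0$ for $X\in\mk_1, Y\in\mk_2$ yields $[V_1,V_2]=0$, so $V_2\subseteq C(V_1)$. Combined with $V_1+V_2=\g_1$ this gives $\g_1\subseteq V_1+C(V_1)$, and condition~\ref{it:vzv} forces $V_1\in\{0,\g_1\}$; symmetrically $V_2\in\{0,\g_1\}$. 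I expect the main obstacle will be ruling out the subcase $V_1=V_2=\g_1$; the other two are symmetric.

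For the subcase $V_1=V_2=\g_1$: the identity $[V_1,V_2]=0$ forces $\g_1$ to be abelian, so $\g'\subseteq\g_{\geq 3}$ and by~\ref{it:gncomm} also $\mz(\g)\subseteq\g_{\geq 3}$. Applying~\ref{it:vzv} to any proper nonzero $V\subsetneq\g_1$ (which satisfies $\g_1\subseteq V+C(V)$ trivially as $\g_1$ is abelian) gives $\dim\g_1\leq 1$, so $\g_1=\R x$. Choose $X\in\mk_1, Y\in\mk_2$ with $\pi_1(X)=\pi_1(Y)=x$ and expand $X=\sum X_k$, $Y=\sum Y_k$. I will prove by induction on $k\geq 2$ that $X_k-Y_k\in\mz(\g)\cap\g_k$. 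In the degree-$(k+1)$ part $\sum_{a+b=k+1}[X_a,Y_b]=0$, the terms with $a=1$ or $b=1$ contribute $[x,Y_k-X_k]$; for interior $(a,b)$ with $a,b\geq 2$, the inductive hypothesis $X_j=Y_j+c_j$ with $c_j$ central gives $[X_a,Y_b]=[Y_a,Y_b]$, so pairing $(a,b)$ with $(b,a)$ makes them cancel, and the possible middle term $a=b$ also vanishes. Hence $[x,X_k-Y_k]=0$, and~\ref{it:zv} yields $X_k-Y_k\in\mz(\g)\cap\g_k$ (either $X_k=Y_k$ or $\g_k\subseteq\mz(\g)$). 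Summing over $k$ gives $Y-X\in\mz(\g)$; since $[\mk_1,\mk_2]=0$ forces $\mz(\g)=(\mz(\g)\cap\mk_1)\oplus(\mz(\g)\cap\mk_2)$, the $\mk_1$-component $-X$ of $Y-X$ lies in $\mz(\g)\subseteq\g_{\geq 3}$, contradicting $\pi_1(X)=x\neq 0$.

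After swapping if necessary, the remaining case reduces to $V_1=0$, so $\mk_1\subseteq\g_{\geq 2}$ and $V_2=\g_1$. Setting $W_j=\pi_j(\mk_1)$, I prove by induction on $j\geq 1$ that $W_j\subseteq\mz(\g)$, with $W_1=0$ as the base. For $X\in\mk_1$ and $Y\in\mk_2$, all contributions of lower index in the degree-$(j+1)$ part of $[X,Y]=0$ vanish by the inductive hypothesis, leaving $[X_j,\pi_1(Y)]=0$; as $\pi_1(Y)$ ranges over $V_2=\g_1$, this gives $[W_j,\g_1]=0$, and~\ref{it:zv} yields $W_j\subseteq\mz(\g)$. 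Hence $\mk_1\subseteq\mz(\g)$. Finally, since $[\mk_1,\mk_2]=0$ gives $\g'=[\mk_1,\mk_1]+[\mk_2,\mk_2]$ and $\mk_1\subseteq\mz(\g)$ gives $[\mk_1,\mk_1]=0$, condition~\ref{it:gncomm} yields $\mk_1\subseteq\mz(\g)\subseteq\g'\subseteq\mk_2$, so $\mk_1\subseteq\mk_1\cap\mk_2=0$, the desired contradiction.
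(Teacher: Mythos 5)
Your proof is correct and follows essentially the same strategy as the paper's: project onto graded components, use condition~\ref{it:vzv} to force $\pi_1$ of one ideal to vanish, induct upward through the grading using condition~\ref{it:zv} to push that ideal into the center, and finish with condition~\ref{it:gncomm}. The only divergence is the subcase $V_1=V_2=\g_1$, where the paper gets an immediate contradiction (this would make $\g_1$ abelian, but $\mz(\g)\subseteq\g'$ forces $\g_1\nsubseteq\mz(\g)$, and then the $k=1$ case of condition~\ref{it:zv} applied to $V=\g_1$ forces $[\g_1,\g_1]\neq0$), so your reduction to $\dim\g_1\leq 1$ and the second induction there, while valid, are unnecessary.
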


\begin{proof}
For each $k$, denote by $\pi_k$ the projection on $\mg_k$, and assume $\g$ decomposes as the direct sum of ideals, $\mg=\mh\oplus\mk$. We trivially have
\begin{equation}
 \label{eqn:g1sum}
\mg_1=\pi_1(\mh)+\pi_1(\mk).
\end{equation}
For any $X,Y$ in $\mg$,  we have
\begin{equation}
\label{eqn:pi2XY}
\pi_2([X,Y])=[\pi_1(X),\pi_1(Y)].
\end{equation}
Then $\pi_1(\mh)$ and $\pi_1(\mk)$ also commute; indeed, if $X\in\mh$ and $Y\in\lie k$ then
\[0=\pi_2([X,Y])=[\pi_1(X),\pi_1(Y)].\]
Thus, $\pi_1(\mk)\subset C(\pi_1(\mh))$ and~\eqref{eqn:g1sum}, together with~\ref{it:vzv} in the hypothesis, imply that either $\pi_1(\mh)=0$ or $\pi_1(\mh)=\mg_1$; the same holds for $\pi_1(\mk)$.
We cannot have $\pi_1(\mh)=\g_1=\pi_1(\mk)$
because~\ref{it:gncomm} implies that $\g_1\nsubseteq \mz(\mg)$, so the case $k=1$ of~\ref{it:zv} gives $[\g_1,\g_1]\neq0$.
Therefore, without loss of generality, we can assume that $\pi_1(\mh)$ is zero, and by~\eqref{eqn:g1sum}, $\pi_1(\mk)=\g_1$.

We will prove by induction that for every $k=1,\ldots, r$,
\begin{equation}
\label{eq:ind}
\mg_k\nsubseteq \mz(\mg)\Longrightarrow\pi_k(\mh)=0.
\end{equation}
By our assumption,~\eqref{eq:ind} holds for $k=1$ since $\pi_1(\mh)=0$.

Suppose~\eqref{eq:ind} holds for $k-1$ and let $Y\in\mh$. Then,
\[
Y=\sum_{j=1}^{r}\pi_j(Y),\quad \text{ where }\quad \sum_{j=1}^{k-1}\pi_j(Y)\in \mz(\mg).
\]
Hence, using again that $\mk$ and $\mh$ commute, we get
\[
0=\pi_{k+1}([X,Y])=\sum_{i=1}^{k+1}[\pi_i(X),\pi_{k+1-i}(Y)]=[\pi_1(X),\pi_k(Y)],\qquad \forall X\in\mk.
\]
Therefore, $\pi_k(\mh)$ commutes with $\mg_1$, which by~\ref{it:zv} implies that either $\mg_k\subseteq \mz(\mg)$ or $\pi_k(\mh)=0$. So~\eqref{eq:ind} follows for all $k=1, \ldots, r$.

As a consequence, $\mh\subseteq \mz(\mg)$ which implies both $\g'=\mh'\oplus\mk'=\mk'$, and  $\mh\subseteq\g'$ by~\ref{it:gncomm}. So $\mh=0$ and $\mg$ is irreducible.
\end{proof}

We are now ready to introduce the objects of study of the present section: free nilpotent Lie algebras. We shall briefly recall some of their main structural properties. We refer the reader to \cite{GrGr90,Se92} for further information on these Lie algebras.

Let $\mn_{m,s}$ denote the free nilpotent Lie algebra of degree $s$ over $m\geq 2$ generators. We have a grading
\begin{equation}\label{eq:gradnds}
\lie {n}_{m,s}=W_1\oplus W_2\oplus \dots \oplus  W_s
\end{equation}
where, if $\{X_1,\ldots, X_m\}$ is a generator set, $W_1=\Span{X_1,\dotsc, X_m}$.
The $k$-th term of the central descending series of $\mn_{m,s}$ is given by
\[
(\mn_{m,s})^k=W_{k+1}\oplus \dots \oplus W_{s},\qquad k=0,\ldots, s-1.\]

For each $k=1,\ldots, s$, $\dim W_{k}=d_m(k)$ satisfies
\begin{equation}
 \label{eqn:recurrenceformula}
 kd_m(k)=m^k-\sum_{\ell<k,\ \ell| k} \ell d_m(\ell).
\end{equation}
In particular, for any $m$ one has
\begin{equation}
 \label{eqn:dmsmalls}
\begin{gathered}
d_m(1)=m,\quad d_m(2)=m(m-1)/2,\quad d_m(3)=m(m^2-1)/3,\\
d_m(4)=m^2(m^2-1)/4, \quad d_m(5)=m(m^4-1)/5.
\end{gathered}
\end{equation}

An explicit basis of $\mn_{m,s}$, known as the Hall basis, can be constructed through bases of each $W_1,\ldots, W_s$ with the methods of~\cite{Ha50}.
\begin{example}
\label{example:hall}
Let $\mn_{2,5}$ be the free $5$-step nilpotent Lie algebra on the two generators $e_1,e_2$.

The Hall basis $\{e_i\}_{i=1}^{14}$ can be explicitly written as
\begin{gather*}
[e_2,e_1]=:e_3\\
[[e_2,e_1],e_1]=[e_3,e_1]=:e_4, \quad[[e_2,e_1],e_2]=[e_3,e_2]=:e_5\\
[[[e_2,e_1],e_1],e_1]=[e_4,e_1]=:e_6, \quad [[[e_2,e_1],e_1],e_2]=[e_4,e_2]=:e_7,\quad
[[[e_2,e_1],e_2],e_2]=[e_5,e_2]=:e_8,\\
[[[[e_2,e_1],e_1],e_1],e_1]=[e_6,e_1]=:e_{9},\quad
[[[[e_2,e_1],e_1],e_1],e_2]=[e_6,e_2]=:e_{10},\\
[[[[e_2,e_1],e_1],e_2],e_2]=[e_7,e_2]=:e_{11},\quad
[[[[e_2,e_1],e_2],e_2],e_2]=[e_8,e_2]=:e_{12},\\
[e_4,e_3]=:e_{13},\quad  [e_5,e_3]=:e_{14}.
\end{gather*}
The basis is adapted to the grading~\eqref{eq:gradnds}, namely
\[\mn_{2,5}=W_1\oplus \dots \oplus W_5=\Span{e_1,e_2}\oplus \Span{e_3}\oplus \Span{e_4,e_5}\oplus\Span{e_6,e_7,e_8}\oplus\Span{e_9,\dots, e_{14}}.\]
\end{example}

\begin{lemma}
\label{lemma:cotangentirreducible}
If $s\geq 2$, then $T^*\mn_{m,s}$  is irreducible.
\end{lemma}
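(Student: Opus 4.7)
My plan is to apply Lemma~\ref{lemma:irreducible} to $T^*\mn_{m,s}$ equipped with the positive grading
\[\mg_k = W_k\oplus W_{s+1-k}^*,\quad k=1,\ldots,s,\]
which is compatible with the semidirect-product bracket because the coadjoint action sends $W_i\otimes W_j^*$ into $W_{j-i}^*$. In particular $\mg_1=W_1\oplus W_s^*$ and $\mg_s=W_s\oplus W_1^*$. The standard identities $\mz(T^*\mh)=\mz(\mh)\oplus(\mh')\ann$ and $(T^*\mh)'=\mh'\oplus\mz(\mh)\ann$, combined with $\mz(\mn_{m,s})=W_s$ and $\mn_{m,s}'=W_2\oplus\cdots\oplus W_s$, yield
\[\mz(T^*\mn_{m,s})=W_s\oplus W_1^*=\mg_s,\quad (T^*\mn_{m,s})'=(W_2\oplus\cdots\oplus W_s)\oplus(W_1^*\oplus\cdots\oplus W_{s-1}^*),\]
so condition~(\ref{it:gncomm}) holds, and the hypothesis $\mg_k\not\subseteq\mz(\mg)$ in condition~(\ref{it:zv}) reduces to $k<s$.

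For condition~(\ref{it:zv}), I take $k<s$ and $V\subseteq\mg_k$ with $[V,\mg_1]=0$. A generic $v=Y+\beta\in V$ has $Y\in W_k$ and $\beta\in W_{s+1-k}^*$, and for $X\in W_1$ the bracket $[v,X]$ has $W_{k+1}$-component $[Y,X]$ and $W_{s-k}^*$-component $[\beta,X]$. Both must vanish for all $X\in W_1$: the former forces $Y\in\mz(\mn_{m,s})\cap W_k=0$, while the latter forces $\beta$ to annihilate $[W_1,W_{s-k}]$, which equals $W_{s-k+1}$ by the standard surjectivity $[W_1,W_j]=W_{j+1}$ in free nilpotent Lie algebras; hence $\beta=0$ and $V=0$.

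The main obstacle is condition~(\ref{it:vzv}): if $V\subseteq\mg_1$ satisfies $\mg_1\subseteq V+C(V)$, then $V\in\{0,\mg_1\}$. Since $V\subseteq\mg_1$, this is equivalent to $\mg_1\subseteq V+(C(V)\cap\mg_1)$. For $X+\alpha\in C(V)\cap\mg_1$ and $Y+\beta\in V$, the vanishing of $[X+\alpha,Y+\beta]$ decomposes as $[X,Y]=0$ in $W_2$ and $[X,\beta]+[\alpha,Y]=0$ in $W_{s-1}^*$. The key freeness fact is that the centralizer $C_{W_1}(U)$ of a subspace $U\subseteq W_1$ equals $W_1$ if $U=0$, equals $U$ if $\dim U=1$, and vanishes if $\dim U\geq 2$. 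Let $\pi\colon\mg_1\to W_1$ be the projection and $d=\dim\pi(V)$. If $d\geq 2$, the $W_2$-condition forces $\pi(C(V)\cap\mg_1)=0$, so $\pi(V+C(V)\cap\mg_1)=\pi(V)$; the required inclusion $W_1\subseteq\pi(V)$ then forces $\pi(V)=W_1$, and the $W_{s-1}^*$-condition combined with $[W_1,W_{s-1}]=W_s$ makes the $W_s^*$-component of $C(V)\cap\mg_1$ vanish as well, yielding $C(V)\cap\mg_1=0$ and $V=\mg_1$. If $d=1$, then $\pi(C(V)\cap\mg_1)\subseteq\pi(V)$ has dimension $1<m$, contradicting $W_1\subseteq\pi(V+C(V)\cap\mg_1)$. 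If $d=0$, then $V\subseteq W_s^*$, and the $W_1$-projection of $V+C(V)\cap\mg_1$ equals $\{X\in W_1:V\subseteq\ker(\ad_X^*|_{W_s^*})\}$; requiring $W_1$ to be contained in this set forces $V$ to annihilate $[W_1,W_{s-1}]=W_s$, so $V=0$.
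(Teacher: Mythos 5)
Your proof is correct, but it takes a genuinely different route from the paper's. The paper grades $T^*\mn_{m,s}$ with $2s$ pieces, assigning degree $2s+1-i$ to $W_i^*$, so that $\mg_1=W_1$ alone; condition~\ref{it:vzv} of Lemma~\ref{lemma:irreducible} then only concerns subspaces of $W_1$ and is dispatched by passing to the quotient by $I=\bigoplus_{i\geq 3}\mg_i$, which is isomorphic to $\mn_{m,2}$, where centralizers of nonzero elements of $W_1$ meet $W_1$ in a line. You instead fold $\mg$ and $\mg^*$ into the $s$-piece grading $\mg_k=W_k\oplus W_{s+1-k}^*$ (correctly compatible, since $\ad^*$ maps $W_i\otimes W_j^*$ to $W_{j-i}^*$), at the price that $\mg_1=W_1\oplus W_s^*$ now has two blocks and condition~\ref{it:vzv} requires your three-way case analysis on $d=\dim\pi(V)$; that analysis is sound, resting on the same freeness facts the paper uses ($W_2\cong\Lambda^2W_1$ for the centralizer trichotomy, $[W_1,W_j]=W_{j+1}$, and injectivity of $Y\mapsto[Y,\cdot]|_{W_1}$ on $W_k$ for $k<s$). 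Your verifications of conditions~\ref{it:zv} and~\ref{it:gncomm} are essentially the paper's computations repackaged per combined graded piece, and the identities $\mz(T^*\mh)=\mz(\mh)\oplus(\mh')\ann$ and $(T^*\mh)'=\mh'\oplus\mz(\mh)\ann$ you invoke are exactly what the paper establishes by the dimension count around its equation for $(T^*\mn_{m,s})'$. Net effect: the paper's finer grading buys a shorter argument for~\ref{it:vzv}; your coarser grading is equally valid and keeps the index set small, but transfers the work into the case analysis.
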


\begin{proof}
Consider the positive grading of $T^*\mn_{m,s}$ obtained by
extending the natural grading~\eqref{eq:gradnds} of $\mn_{m,s}$, and assigning degree $2s+1-i$ to $W_i^*$. We will show that $T^*\mn_{m,s}$, with this grading, satisfies the hypotheses of Lemma~\ref{lemma:irreducible}.

To see that~\ref{it:vzv} holds, pick a nonzero proper subspace $V$ of $W_1$. If we let $I=\bigoplus_{i=3}^{2s} W_i$, the quotient $T^*\mn_{m,s}/I$ is isomorphic to $\lie{n}_{m,2}$. For any $0\neq X\in V$, the centralizer of  $X+I$ in $T^*\mn_{m,s}/I\cong \lie{n}_{m,2}$ intersects $W_1+I$ in $\Span{X+I}$. Therefore, $C(\Span{X})\cap W_1\subset\Span{X}$. This shows that $V+C(V)$ does not contain $W_1$.

Notice that for every $i=1,\ldots, s-1$, $W_i$ is not contained in the center and $[X,W_1]=0$ for $X\in W_i$, implies $X=0$. For the dual spaces in the grading, one can easily check that $W_1^*$ is contained in the center; moreover,
for every $i=2,\ldots, s$, if $\alpha\in W_i^*$ satisfies $[\alpha,W_1]=0$, then
\[
\alpha([X,Y])=0, \qquad \text{ for all }X\in W_1,\;Y\in W_{i-1}  .
\]
Since $W_i=[W_1,W_{i-1}]$, this implies that $\alpha=0$. Therefore, $W_i^*\nsubseteq \mz(T^*\mn_{m,s})$ and the only subspace of $W_i^*$ commuting with $W_1$ is the trivial space.

From the above, we get that~\ref{it:zv} in Lemma~\ref{lemma:irreducible} holds and also that
$\mz(T^*\mn_{m,s})= W_s\oplus W_1^*$. In addition, one can easily check that
\begin{equation}\label{eq:derivcot}
(T^*\mn_{m,s})'\subseteq W_2\oplus \cdots \oplus W_s\oplus W_{s-1}^*\oplus \cdots \oplus W_1^*
\end{equation}
and, since $\dim (T^*\mn_{m,s})'=\dim T^*\mn_{m,s}-\dim \mz(T^*\mn_{m,s})$, we get that the sets in~\eqref{eq:derivcot} are actually equal. Hence,~\ref{it:gncomm} in Lemma~\ref{lemma:irreducible} is satisfied and the Lie algebra $T^*\mn_{m,s}$ is irreducible.
\end{proof}

On the graded Lie algebra $\mn_{m,s}$, let $D=\sum_{k=1}^s k\pi_k$, where $\pi_k$ is the projection on the space of degree $k$.
\begin{lemma}
\label{lemma:nikfree}
The Nikolayevski derivation of $\mn_{m,s}$ is $N=\lambda D$, where $\lambda$ satisfies
\begin{equation}
 \label{eqn:horrible}
 \sum_{k=1}^{s} kd_m(k)(k\lambda-1)=0.
\end{equation}
In particular, $N$ has positive eigenvalues.
\end{lemma}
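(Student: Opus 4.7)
The plan is to exhibit $N=\lambda D$, with $D=\sum_k k\pi_k$ the grading derivation attached to \eqref{eq:gradnds}, as a semisimple derivation satisfying the defining trace identity \eqref{eq:nikdef} for every derivation of $\mn_{m,s}$, and then invoke the uniqueness of the Nikolayevski derivation up to automorphism. Since the grading is compatible with the Lie bracket, $D$ is itself a derivation, so $N=\lambda D$ is semisimple with eigenvalues $\lambda,2\lambda,\dots,s\lambda$; positivity of the eigenvalues will follow from positivity of $\lambda$ once the formula is established.

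To verify the trace condition, I would decompose an arbitrary $D'\in\Der(\mn_{m,s})$ along the grading as $D'=\sum_j D'_j$, where $D'_j$ sends $W_k$ into $W_{k+j}$. A straightforward bookkeeping of the Leibniz rule by degree shows that each $D'_j$ is itself a derivation. When $j\neq 0$ both $D'_j$ and $DD'_j$ shift the grading, so both have trace zero and the identity $\Tr(ND'_j)=\Tr(D'_j)$ holds trivially. The substantive case is $j=0$: here $D'_0$ preserves the grading and, because $\mn_{m,s}$ is free on $W_1$, is the unique derivation extending its restriction $A:=D'_0|_{W_1}\in\glg(W_1)$.

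The crux is to show that for each $k$ the linear functional $\chi_k\colon A\mapsto\Tr(D'_0|_{W_k})$ on $\glg(W_1)$ is a scalar multiple of the trace. Every $g\in\GL(W_1)$ extends to an automorphism $\phi_g$ of $\mn_{m,s}$ preserving each $W_k$; uniqueness of the extension gives $\phi_g D'_0\phi_g^{-1}$ as the derivation extending $gAg^{-1}$, so $\chi_k$ is $\GL(W_1)$-conjugation invariant. The only such linear functionals on $\glg(W_1)$ are multiples of the ordinary trace, and evaluating at $A=\id$ (for which $D'_0=D$ and $\Tr(D|_{W_k})=k\,d_m(k)$) identifies the constant, yielding
\[\Tr(D'_0|_{W_k})=\frac{k\,d_m(k)}{m}\,\Tr(A).\]
Summing over $k$ and imposing $\Tr(ND'_0)=\Tr(D'_0)$ produces
\[\lambda\sum_{k=1}^s k^2 d_m(k)=\sum_{k=1}^s k\,d_m(k),\]
which is exactly \eqref{eqn:horrible}. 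Since all $d_m(k)>0$, the resulting $\lambda$ is strictly positive, and hence every eigenvalue of $N$ is positive. The main delicacy is the invariance-theoretic identification of $\Tr(D'_0|_{W_k})$ with a multiple of $\Tr A$; once this is in hand, everything else reduces to graded bookkeeping and the freeness of $\mn_{m,s}$.
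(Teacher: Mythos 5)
Your proof is correct, but it takes a genuinely different route from the paper. The paper's proof is essentially a citation: it invokes the result of del Barco that every derivation of $\mn_{m,s}$ is a linear combination of the grading derivation $D$ and a traceless derivation, concludes immediately that $N=\lambda D$, and then obtains \eqref{eqn:horrible} by applying the trace identity to $N$ itself, i.e.\ from $\Tr N=\Tr N^2$. You instead verify the defining identity $\Tr(ND')=\Tr(D')$ against an \emph{arbitrary} derivation $D'$ from scratch: the decomposition $D'=\sum_j D'_j$ by degree disposes of the grading-shifting parts (both $D'_j$ and $ND'_j$ are then traceless), and the degree-preserving part is handled by your invariance argument, namely that $A\mapsto\Tr(D'_0|_{W_k})$ is a $\GL(W_1)$-conjugation-invariant linear functional on $\glg(W_1)$, hence (since $\slg(W_1)$ is irreducible and nontrivial for $m\geq2$) a multiple of $\Tr A$, with the constant $k\,d_m(k)/m$ fixed by evaluating at $A=\id$. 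In effect you reprove the portion of the cited structure result that the paper uses: your formula shows that $D'_0-\frac{\Tr A}{m}D$ is traceless, which is exactly the decomposition the paper borrows. What your approach buys is self-containedness (no external reference needed) at the cost of the Schur-type argument; what the paper's approach buys is brevity, since once $N=\lambda D$ is known, testing the identity against the single derivation $N$ already pins down $\lambda$. Both routes land on the same equation $\lambda\sum_k k^2d_m(k)=\sum_k k\,d_m(k)$ and the same positivity conclusion.
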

\begin{proof}
Any linear map $W_1\to\mn_{m,s}$ extends to a unique derivation of $\mn_{m,s}$. In particular, the inclusion map extends to the derivation $D$ defined above. Moreover, every derivation of $\mn_{m,s}$ splits as a linear combination of $D$ and a traceless derivation  $f$ such that $\Tr(Df)=0$ (see~\cite[Proposition 2.4]{BdlC14}).
Therefore, if we set
\[N=\lambda D, \qquad \Tr N=\Tr N^2,\]
we have that $\Tr(Nf)=\Tr f$ for all $f$, i.e. $N$ is the Nikolayevski derivation.

Explicitly, $\lambda$ must satisfy
\[\lambda \sum_{k=1}^s kd_m(k)= \lambda^2\sum_{k=1}^s k^2d_m(k);\]
this implies that either~\eqref{eqn:horrible} holds or $\lambda=0$. However the latter cannot hold, since $\Tr (ND)=\Tr(D)\neq0$. Furthermore,~\eqref{eqn:horrible} implies $\lambda>0$, for otherwise each summand in the left-hand side would be negative; therefore, the eigenvalues of $\lambda D$ are positive.
\end{proof}

\begin{example}
\label{ex:nm2}
Let $\mn_{m,2}$ denote the free nilpotent Lie algebra of step $2$ over $m$ generators. By Lemma~\ref{lemma:nikfree}, the Nikolayevsky derivation is
\[\frac{m}{2m-1}(\pi_1+2\pi_2).\]
Indeed, using~\eqref{eqn:dmsmalls}, for $s=2$,~\eqref{eqn:horrible} becomes
\[m(\lambda-1)+m(m-1)(2\lambda-1)=0,\]
i.e. $\lambda = \frac{m}{2m-1}$.
\end{example}

\begin{example}
Let $\mn_{m,3}$ denote the free nilpotent Lie algebra of step $3$ over $m$ generators. Using~\eqref{eqn:dmsmalls}, for $s=3$,~\eqref{eqn:horrible} becomes
\[m(\lambda-1)+m(m-1)(2\lambda-1)+m(m^2-1)(3\lambda-1)=0,\]
or equivalently $\lambda = \frac{m^2+m-1}{3m^2+2m-4}$.
Then the Nikolayevsky derivation is
\[ \frac{m^2+m-1}{3m^2+2m-4}(\pi_1+2\pi_2+3\pi_3).\]
\end{example}

\begin{lemma}
\label{lemma:n25notnice}
Let $\g$ be a nice nilpotent Lie algebra of step $s$. Let $f\colon \mn_{2,s}\to\g$ be the Lie algebra homomorphism mapping two generators of $\mn_{2,s}$ to two elements of a nice basis. Then, for the grading in~\eqref{eq:gradnds}, $\dim f(W_5)\leq 4$.

In particular, given a nice nilpotent Lie algebra of step $s\geq 5$, for any two  elements $X_1,X_2$ in a nice basis of $\mg$, the Lie subalgebra of $\mg$ spanned by $X_1$ and $X_2$ is not isomorphic to $\mn_{2,s}$.
\end{lemma}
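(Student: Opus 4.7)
The plan is to use the Jacobi identity in $\mn_{2,s}$ to express two non-Hall bracket expressions of degree five as sums of two Hall basis elements, and then translate these identities via $f$ into linear relations among the images. Concretely, I would first verify in $\mn_{2,s}$ (for $s\geq 5$; otherwise $W_5=0$ and the statement is vacuous) the auxiliary relation $[e_5,e_1]=e_7$, and then derive
\[
[e_7,e_1]=e_{10}+e_{13},\qquad [e_8,e_1]=e_{11}+e_{14}.
\]
Each of these requires one application of Jacobi together with the bracket relations listed in Example~\ref{example:hall}.

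Setting $Y_i:=f(e_i)$, the crucial ingredient is that $Y_1=X_1$ and $Y_2=X_2$ belong to a nice basis of $\mg$; a simple induction on the bracket length then shows that every iterated bracket of $Y_1,Y_2$, and in particular every $Y_i$, is a scalar multiple of a single nice basis element of $\mg$ (possibly zero). Thus both $[Y_7,Y_1]$ and $[Y_8,Y_1]$ are multiples of a single nice basis vector. Applying $f$ to the Jacobi identities above yields
\[
[Y_7,Y_1]=Y_{10}+Y_{13},\qquad [Y_8,Y_1]=Y_{11}+Y_{14}.
\]
Since a sum of two multiples of distinct nice basis vectors cannot itself be a multiple of a single basis vector, in each pair $(Y_{10},Y_{13})$ and $(Y_{11},Y_{14})$ either one of the two vanishes or both are proportional; in every case the pair spans a subspace of dimension at most one.

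Consequently
\[
f(W_5)=\Span{Y_9,Y_{10},Y_{11},Y_{12},Y_{13},Y_{14}}\subseteq\Span{Y_9}+\Span{Y_{12}}+\Span{Y_{10},Y_{13}}+\Span{Y_{11},Y_{14}},
\]
which has dimension at most $4$. For the ``in particular'' clause, since $\dim W_5=d_2(5)=6$ by~\eqref{eqn:dmsmalls}, if the subalgebra of $\mg$ generated by $X_1,X_2$ were isomorphic to $\mn_{2,s}$ via the map sending the generators of $\mn_{2,s}$ to $X_1,X_2$, then $f$ would be injective on $W_5$ and $\dim f(W_5)=6>4$, contradicting what was just proved. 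The only delicate step is the identification of the two Jacobi relations producing disjoint Hall-basis pairs $\{10,13\}$ and $\{11,14\}$; once these are found, niceness does all the remaining work.
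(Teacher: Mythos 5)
Your proposal is correct and follows essentially the same route as the paper's proof: your identities $[e_7,e_1]=e_{10}+e_{13}$ and $[e_8,e_1]=e_{11}+e_{14}$ are exactly the Jacobi relations the paper applies to the triples built from $X=f(e_4),X_1,X_2$ and $Y=f(e_5),X_1,X_2$, and the conclusion that each of the pairs $\{f(e_{10}),f(e_{13})\}$ and $\{f(e_{11}),f(e_{14})\}$ spans at most a one-dimensional subspace (because iterated brackets of nice basis elements are multiples of single basis elements) is the same. The only cosmetic difference is that you bound $\dim f(W_5)$ directly, whereas the paper equivalently bounds $\dim\ker f|_{W_5}\geq 2$ and uses $\dim W_5=6$.
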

\begin{proof}
Let $X_1,X_2\in\g$ be elements of a nice basis of $\mg$, and assume that they are the respective image under $f$ of generators $e_1,e_2$ of $W_1$. Notice that if $s\leq 4$, then $W_5=0$ and the result is obvious.

Let us assume that $s>4$. The image of $f$ is generated by the images of a Hall basis, as given in Example~\ref{example:hall}. In particular, $W_5$ is spanned by elements of length $5$ in the Hall basis, namely, $e_9, \ldots, e_{14}$ (see Example~\ref{example:hall}). We claim that $f$ has a kernel of dimension at least $2$ when restricted to $W_5$. This, together with the fact that $\dim W_5=6$, implies that $\dim f(W_5)\leq 4$.

Consider $X:=f([[e_2,e_1],e_1])=[[X_2,X_1],X_1]$ and  $Y:=f([[e_2,e_1],e_2])=[[X_2,X_1],X_2]$; these are elements in $f(W_3)$ and belong to the nice basis. Moreover, each of the following sets
\begin{align*}
&\left\{[[X,X_1],X_2]=f(e_{10}),\ [[X_1,X_2],X]=f(e_{13}),\ [[X,X_2],X_1]\right\},\\
&\left\{[[Y,X_1],X_2]=f(e_{11}),\ [[X_1,X_2],Y]=f(e_{14}),\ [[Y,X_2],X_1]\right\}
\end{align*}
spans a subspace of dimension $\leq 1$, due to the nice condition joint with the Jacobi identity.
Therefore, $\ker f|_{W_5}\geq 2$ as we wanted to prove.

For the second part, notice that the Lie subalgebra $\mh$ spanned by $X_1,X_2$ is also a nice Lie algebra. If $\mh$ is isomorphic to $\mn_{2,s}$, then there is an isomorphism $f\colon\mn_{2,s}\to \mh$ sending a generator set $e_1,e_2$ to $X_1,X_2$, respectively. Since the kernel is trivial, this contradicts the above for $s\geq 5$.
\end{proof}

In order to study the nice condition on free nilpotent Lie algebras and their cotangents, we need to introduce one more technical result.

\begin{lemma}
\label{lemma:estimate}
For any $s\geq 4$, $m\geq 2$:
\begin{equation}
\label{eq:estimate}
d_m(s)+2\sum_{k=1}^{[\frac{s+1}{2}]} kd_m(k)<m^s.
\end{equation}
\end{lemma}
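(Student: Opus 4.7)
The plan is to bound the left-hand side of~\eqref{eq:estimate} by combining two elementary consequences of the recurrence~\eqref{eqn:recurrenceformula}, reducing the inequality to a simple condition that can be checked directly except in a couple of small cases.

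First I would use the recurrence to establish two estimates valid for all $m\geq 2$: $kd_m(k)\leq m^k$ for every $k$ (since the sum subtracted in~\eqref{eqn:recurrenceformula} is non-negative), and $d_m(s)\leq (m^s-m)/s<m^s/s$ for $s\geq 2$ (using that $1$ is a proper divisor of $s$ and $d_m(1)=m$). Combined with the geometric series bound $\sum_{k=1}^{t}m^k=m(m^t-1)/(m-1)<2m^t$ valid for $m\geq 2$, setting $t=[\frac{s+1}{2}]$ yields
\[
d_m(s)+2\sum_{k=1}^{t}kd_m(k)<\frac{m^s}{s}+4m^t.
\]
So it suffices to prove $4m^t\leq m^s(s-1)/s$, or equivalently $m^{s-t}\geq 4s/(s-1)$.

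Since $s-t=\lfloor s/2\rfloor\geq 2$ for $s\geq 4$ and $4s/(s-1)\leq 16/3$ for $s\geq 4$, this reduced inequality holds whenever $m\geq 3$ (so that $m^{s-t}\geq 9$), and also when $m=2$ and $s\geq 6$ (so that $2^{s-t}\geq 8$). The remaining cases $(m,s)\in\{(2,4),(2,5)\}$ would be checked by hand using the values of $d_2(k)$ supplied by~\eqref{eqn:dmsmalls}: for $s=4$ the left-hand side equals $3+2(2+2)=11<16$, and for $s=5$ it equals $6+2(2+2+6)=26<32$.

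The only mild obstacle is that the uniform estimate $m^s/s+4m^t$ loses just slightly too much when $m=2$ and $s\in\{4,5\}$, which is why the two direct verifications are needed. A unified proof avoiding them could use the sharper bound $kd_m(k)\leq m^k-m$ for $k\geq 2$ (improving the geometric sum by subtracting $(t-1)m$), but the finite case check seems more economical.
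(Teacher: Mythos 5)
Your proof is correct. Both estimates you derive from~\eqref{eqn:recurrenceformula} are valid: $kd_m(k)\leq m^k$ because the subtracted divisor sum is nonnegative, and $d_m(s)<m^s/s$ because the divisor $\ell=1$ contributes $m>0$ to that sum for $s\geq 2$. The reduction to $m^{s-t}\geq 4s/(s-1)$ with $t=[\frac{s+1}{2}]$, $s-t=\lfloor s/2\rfloor$, is arithmetic, and the two remaining cases $(2,4)$ and $(2,5)$ check out numerically ($11<16$ and $26<32$).

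The overall strategy is the same as the paper's --- isolate the dominant term $m^s/s$ hidden in $d_m(s)$ and control the remaining sum by a geometric series --- but the execution differs in a way worth noting. The paper substitutes the recurrence~\eqref{eqn:recurrenceformula} \emph{exactly} into the left-hand side of~\eqref{eq:estimate}, observes that every proper divisor $k$ of $s$ satisfies $k\leq[\frac{s+1}{2}]$ so the divisor sum can be absorbed into the other sum, and thereby converts the inequality into an equivalent one, $\sum_{k\leq[\frac{s+1}{2}]}kd_m(k)\leq\frac{s-1}{2s}m^s$ up to small corrections; it then treats $s=4$ and $s=5$ separately for all $m$ and uses the geometric series only for $s\geq 6$. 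You instead use the cruder one-sided bound $d_m(s)<m^s/s$, which skips the divisor bookkeeping entirely and shortens the argument, at the cost of a constant $4$ (rather than the paper's effective $2$) in front of $m^t$; this is exactly why you lose the cases $(m,s)=(2,4),(2,5)$ and must verify them by hand, whereas the paper loses the cases $s\in\{4,5\}$ for all $m$ and must verify those. Neither version avoids a finite case check; yours confines it to two numerical evaluations, which is arguably the more economical bookkeeping.
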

\begin{proof}
By~\eqref{eqn:recurrenceformula}, the left hand side of~\eqref{eq:estimate} equals
\begin{equation}\label{eq:ddsp}
d_m(s)+2\sum_{k=1}^{[\frac{s+1}{2}]} kd_m(k)=\frac1{s}\left( m^s- \sum_{\begin{smallmatrix}k<s\\k|s\end{smallmatrix}} kd_m(k)\right)+2\sum_{k=1}^{[\frac{s+1}{2}]}kd_m(k).
\end{equation}

In addition, it is easy to see that for every $s>1$,  the following inclusion holds
\[
\{k\in\N\;:\;k<s,\ k|s\}\subseteq \left\{k\in\N\;:\;k\leq \left[\frac{s+1}{2}\right]\right\}.
\]
Indeed, if $1\leq k<s$, $k|s$, then $s=k\ell$ with $\ell\geq 2$ and thus $k\leq s/2$. Therefore, $k\leq [s/2]\leq [\frac{s+1}{2}]$.

Using this observation in~\eqref{eq:ddsp}, we get
\begin{equation}
d_m(s)+2\sum_{k=1}^{[\frac{s+1}{2}]} kd_m(k)
=\frac1{s}m^s+2\sum_{ \begin{smallmatrix}k=1\\k\mid s\end{smallmatrix}   }^{[\frac{s+1}{2}]} (k-\frac{k}{2s})d_m(k)+2\sum_{ \begin{smallmatrix}k=1\\k\nmid s\end{smallmatrix}   }^{[\frac{s+1}{2}]}kd_m(k).
\end{equation}
Therefore,~\eqref{eq:estimate} is equivalent to the following
\begin{equation}\label{eq:claim}
\sum_{\begin{smallmatrix}k=1\\
k|s\end{smallmatrix}}^{[\frac{s+1}{2}]}(k-\frac{k}{2s})d_m(k)+\sum_{\begin{smallmatrix}k=1\\
k \nmid s\end{smallmatrix}}^{[\frac{s+1}{2}]} kd_m(k)<\frac{(s-1)}{2s}m^s.
\end{equation}
We will show that~\eqref{eq:claim} holds for any $m\geq 2$ and $s\geq 4$. We clearly have
\[\sum_{\begin{smallmatrix}k=1\\
k|s\end{smallmatrix}}^{[\frac{s+1}{2}]}(k-\frac{k}{2s})d_m(k)+\sum_{\begin{smallmatrix}k=1\\
k \nmid s\end{smallmatrix}}^{[\frac{s+1}{2}]} kd_m(k)
\leq
\sum_{k=1}^{[\frac{s+1}{2}]}kd_m(k).\]
For $s=4$, \eqref{eq:claim} follows from
\[\sum_{k=1}^{[\frac{4+1}{2}]}kd_m(k)=m+m(m-1)\leq \frac{4-1}{8}m^4.\]
For $s=5$, it follows from
\[\sum_{k=1}^{[\frac{5+1}{2}]}kd_m(k)=m+m(m-1)+m(m^2-1)\leq \frac{5-1}{10}m^5.\]
For $s\geq 6$, we  have
\[[\frac{s+1}2]+1=s-2-\lceil\frac{s-7}2\rceil\leq s-2.\]
By~\eqref{eqn:recurrenceformula}, $kd_m(k)\leq m^k$; we obtain
\begin{equation}\label{eq:mult}
\sum_{k=1}^{[\frac{s+1}{2}]}kd_m(k)
\leq \sum_{k=1}^{[\frac{s+1}{2}]}m^k=\frac{m^{[\frac{s+1}{2}]+1}-1}{m-1}
\leq \frac{m^{s-2}-1}{m-1}< m^{s-2}\leq \frac14m^s\leq
\frac{s-1}{2s}m^s.\qedhere
\end{equation}
\end{proof}

\begin{theorem}
\label{thm:rude}
Let $\mn_{m,s}$ be the free $s$-step nilpotent Lie algebra on $m$ generators. The following are equivalent:
\begin{enumerate}
\item $\mn_{m,s}$ is nonnice;
\item $T^*\mn_{m,s}$ is nonnice;
\item $m=2$ and $s\geq 5$ or $m\geq 3$ and $s\geq 3$.
\end{enumerate}
\end{theorem}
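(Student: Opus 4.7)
My plan is to prove the three-way equivalence via the cycle $(2)\Rightarrow(1)\Rightarrow(3)\Rightarrow(1)\Rightarrow(2)$, splitting the work into four arguments of increasing difficulty.

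The implication $(2)\Rightarrow(1)$ is immediate from the remark in the Introduction that a nice basis $\{e_1,\dotsc,e_n\}$ of $\mn_{m,s}$ together with its dual basis $\{e^1,\dotsc,e^n\}$ yields a nice basis of $T^*\mn_{m,s}$; the contrapositive gives what we want. For $(1)\Rightarrow(3)$, I would verify that $\mn_{m,s}$ is nice whenever $(m,s)$ lies outside the set in~(3), i.e.\ in $\{(m,s)\colon s\le 2\}\cup\{(2,3),(2,4)\}$: for $s=1$ and $s=2$ a direct construction of a nice basis is standard (generators together with their pairwise brackets in the $s=2$ case), and for $\mn_{2,3}$ and $\mn_{2,4}$ the Hall basis of Example~\ref{example:hall} can be checked by hand to satisfy the niceness conditions, the Jacobi identity being used to handle the brackets of the form $[e_i,e_j]$ that do not arise directly from the Hall construction.

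For $(3)\Rightarrow(1)$, I would split into two subcases according to $m$. First, for $m\ge3$ and $s\ge3$, Lemma~\ref{lemma:nikfree} identifies $W_1$ as an eigenspace of the Nikolayevski derivation of dimension $m$ with $[W_1,[W_1,W_1]]=W_3$ of dimension $d_m(3)=m(m^2-1)/3$; applying Lemma~\ref{lemma:freenotnice} to $W=W_1$, niceness would force $m(m^2-1)/3\le m(m-1)(m+4)/6$, which simplifies to $m\le 2$, contradicting $m\ge 3$. Second, for $m=2$ and $s\ge 5$, a nice basis of $\mn_{2,s}$ can be chosen to diagonalize the Nikolayevski derivation, so $W_1=\Span{e_1,e_2}$ is spanned by two nice basis elements $X_1,X_2$; since $W_1$ generates $\mn_{2,s}$, so do $X_1,X_2$, contradicting the last assertion of Lemma~\ref{lemma:n25notnice}.

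The last step $(1)\Rightarrow(2)$ invokes Lemma~\ref{lemma:nonnice}, and thus hinges on checking its hypothesis that for every eigenvalue $\mu$ of the Nikolayevski derivation $N$ of $\mn_{m,s}$, the value $2-\mu$ is not also an eigenvalue. By Lemma~\ref{lemma:nikfree}, this amounts to $(k+j)\lambda\ne 2$ for all $k,j\in\{1,\dotsc,s\}$, i.e.\ $\lambda\ne 2/n$ for $n\in\{2,\dotsc,2s\}$, with $\lambda=\sum_k kd_m(k)/\sum_k k^2d_m(k)$. The easy sign inequalities $\sum_k k(s-k)d_m(k)>0$ and $\sum_k k(k-1)d_m(k)>0$ give $\lambda\in(1/s,1/2)$, which already rules out $n\le 4$ and $n\ge 2s$; I expect the main obstacle to be excluding $\lambda=2/n$ for the intermediate integers $n\in\{5,\dotsc,2s-1\}$. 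Rewriting $\lambda=2/n$ as the linear relation $\sum_k k(n-2k)d_m(k)=0$, I would use the recurrence~\eqref{eqn:recurrenceformula} to control the positive contributions (from $k<n/2$) and Lemma~\ref{lemma:estimate} to show they are dominated in absolute value by the negative tail at $k$ close to $s$, contradicting the equality in each case.
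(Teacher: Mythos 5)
Your steps $(2)\Rightarrow(1)$, $(1)\Rightarrow(3)$ and $(3)\Rightarrow(1)$ are correct and essentially coincide with the paper's argument: the same easy cases $s\le2$, $(2,3)$, $(2,4)$, the same application of Lemma~\ref{lemma:n25notnice} for $m=2$, $s\ge5$, and the same count $\frac13m(m^2-1)>\frac16m(m^2+3m-4)$ for $m\ge3$ via Lemma~\ref{lemma:freenotnice}. The problem is the step $(1)\Rightarrow(2)$. By routing it through Lemma~\ref{lemma:nonnice} you commit yourself to verifying that $2-j\lambda\ne k\lambda$ for \emph{every} pair of eigenvalues, i.e.\ $\lambda\ne 2/n$ for all $n\in\{2,\dots,2s\}$. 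The paper does not do this: it only needs the single eigenspace $W_1$ to remain an eigenspace of $\tilde N=a(N-N^*+2P)$, i.e.\ $\lambda(1+n)\ne2$ for $n\in\{1,\dots,s\}$, and then reruns the two counting arguments of Lemmas~\ref{lemma:freenotnice} and~\ref{lemma:n25notnice} directly inside $T^*\mn_{m,s}$ with $W=W_1$. That restricted range is exactly what~\eqref{eqn:horrible} combined with Lemma~\ref{lemma:estimate} can handle, since there $2s-n-1\ge s-1$ and the positive part of the sum is supported on $k\le[\frac{s+1}{2}]$.

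Your extra cases $n\in\{s+2,\dots,2s-1\}$ are a genuine gap, and the strategy you sketch for them would fail. For such $n$ the positive contributions to $\sum_k k(n-2k)d_m(k)$ come from all $k<n/2$, which reaches up to $k=s-1$, a range Lemma~\ref{lemma:estimate} says nothing about; worse, the domination goes the wrong way: for $m=2$, $s=5$, $n=9$ the positive part is $2\cdot7+2\cdot5+6\cdot3+12\cdot1=54$ while the negative tail contributes only $30$, so the positive terms are certainly not dominated by the tail. Since $2/\lambda=2\sum_kk^2d_m(k)/\sum_kkd_m(k)$ approaches the integer $2s-2$ from above as $s$ grows (for $m=2$), excluding integrality of $2/\lambda$ over the whole range is a delicate arithmetic statement that no crude estimate will settle. (A minor further slip: $\sum_kk(k-1)d_m(k)>0$ only gives $\lambda<1$; for $\lambda<1/2$ you need $\sum_kk(k-2)d_m(k)>0$, which does hold for $s\ge3$.) To repair the proof, either establish this stronger non-integrality claim by a new argument, or weaken what you ask of the Nikolayevsky derivation of $T^*\mn_{m,s}$ to the invariance of $W_1$ alone and conclude as the paper does.
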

\begin{proof}
It is easy to see that $\mn_{m,s}$ is nice for $s=1,2$. Similarly, for  $\mn_{2,3}$, $\mn_{2,4}$ a nice basis can be constructed by taking the first elements of the basis constructed in Example~\ref{example:hall}. The corresponding cotangents are also nice.

Now assume that either $m=2$ and $s\geq 5$ or $m\geq 3$ and $s\geq 3$. We need to prove that $\mn_{m,s}$ and its cotangent are nonnice.

Consider first the case of $\mn_{m,s}$. In view of Lemma~\ref{lemma:nikfree}, we know that $W_1$ in~\eqref{eq:gradnds} is an eigenspace for the Nikolayevski derivation. This implies that, if  $\mn_{2,s}$ is nice, then any nice basis contains a basis of $W_1$. Let $\{X_1,X_2\}$ denote these two elements, which are also a generator set of $\mn_{2,s}$. Hence, they span $\mn_{2,s}$ contradicting Lemma~\ref{lemma:n25notnice} as soon as $s\geq 5$.

For $m\geq 3$, we claim that
\begin{equation}
\label{eqn:W1W1W1}
 \dim [[W_1,W_1],W_1]-\frac16m(-4+3m+m^2)>0.
\end{equation}
Indeed, on the one hand,
\[\dim [[W_1,W_1],W_1]=\dim W_3=\frac13m(m^2-1),\]
and, on the other hand,
\[\frac13m(m^2-1)-\frac16m(-4+3m+m^2)=\frac16m(m-1)(m-2)>0.\]
This implies~\eqref{eqn:W1W1W1}, so Lemma~\ref{lemma:freenotnice} shows that $\mn_{m,s}$ is not nice.

For $T^*\mn_{m,s}$, if $W_1$ is an eigenspace of the Nikolayevski derivation, the same arguments used for $\mn_{m,s}$ (both $m=2$ and $m\geq 3$) apply to the cotangents. Suppose for a contradiction that $W_1$ is not an eigenspace. Then, reasoning as in the proof of Lemma~\ref{lemma:nonnice}, one can see that $W_1$ is contained in some eigenspace of the form $W_1+W_n^*$, with $1\leq n\leq s$, and $\lambda (1+n)=2$, where $\lambda$ is as in Lemma~\ref{lemma:nikfree}. In this notation, equation~\eqref{eqn:horrible} is equivalent to
\begin{equation}
 \label{eqn:thehorror...thehorror...}
\sum_{k=1}^s kd_m(k)(2k-n-1)=0
\end{equation}
For $s=3$ this equation is
\[0=m(2-n-1)+m(m-1)(4-n-1)+m(m^2-1)(6-n-1).\]
It is straightforward to see that for any $n=1,2,3$ and $m\geq 3$ the equation above cannot hold.

For $s\geq 4$,  we can write
\begin{equation*}
\sum_{k=1}^{[\frac{s+1}{2}]} kd_m(k)(n+1-2k)=\sum_{k=[\frac{s+3}{2}]}^{s} kd_m(k)(2k-n-1).
\end{equation*}
Adding $\sum_{k<s,\ k|s} kd_m(k)(2s-n-1)$ to both sides,  and using on the right hand side that the term added equals $(m^s-sd_m(s))(2s-n-1)$ by~\eqref{eqn:recurrenceformula}, we get
\begin{equation}\label{eq:sumfrac}
\sum_{k=1}^{[\frac{s+1}{2}]} kd_m(k)(n+1-2k)+\sum_{k<s,\ k|s} kd_m(k)(2s-n-1)= \sum_{k=[\frac{s+3}{2}]}^{s-1} kd_m(k)(2k-n-1)+m^s(2s-n-1).
\end{equation}
Since every $k<s$ such that $k|s$ must be less than or equal to $[\frac{s+1}{2}]$, the left hand side in~\eqref{eq:sumfrac} satisfies
\begin{equation}\label{eq:lhs}
 \sum_{k=1}^{[\frac{s+1}{2}]} kd_m(k)(n+1-2k)+\sum_{k<s,\ k|s} kd_m(k)(2s-n-1)\leq \sum_{k=1}^{[\frac{s+1}{2}]} kd_m(k)(2s-2k)\leq 2(s-1)\sum_{k=1}^{[\frac{s+1}{2}]} kd_m(k).
\end{equation}
Moreover, since $1\leq n\leq s$, the right hand side in~\eqref{eq:sumfrac} verifies
\begin{equation}\label{eq:rhs}
m^s(s-1)\leq m^s(2s-n-1)\leq \sum_{k=[\frac{s+3}{2}]}^{s-1} kd_m(k)(2k-n-1)+m^s(2s-n-1).
\end{equation}
Thus, \eqref{eq:sumfrac}-\eqref{eq:rhs} imply
\[
m^s\leq 2\sum_{k=1}^{[\frac{s+1}{2}]} kd_m(k),
\]
contradicting Lemma~\ref{lemma:estimate}. Therefore, $W_1$ is always an eigenspace of $\tilde{N}$ and the result follows.
\end{proof}
\begin{remark}
Given a nice Lie algebra $\g$, the cotangent $T^*\g$ is nice. We do not know whether the converse holds in general, though Theorem~\ref{thm:rude} shows this to be true for the special case of free nilpotent Lie algebras.
\end{remark}

\begin{theorem}
For any $s\geq 3$, there exists an $s$-step nilpotent Lie algebra which is irreducible, not nice and admitting ad-invariant metrics.
\end{theorem}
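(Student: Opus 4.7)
The plan is to exhibit the required Lie algebra as $\mg=T^*\mn_{m,s}$ for some $m\geq 3$; choosing $m=3$ will work uniformly for all $s\geq 3$. Three of the four required properties are essentially immediate from what has already been established in this section: $\mg$ carries the canonical pairing ad-invariant metric~\eqref{eqn:CanonicalCotangentMetric}, it is irreducible by Lemma~\ref{lemma:cotangentirreducible} (which applies since $s\geq 2$), and it is nonnice by Theorem~\ref{thm:rude} (whose hypotheses $m\geq 3$ and $s\geq 3$ are satisfied).

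The only point left to verify is that $\mg$ is exactly $s$-step nilpotent. Since $\mn_{m,s}$ embeds in $\mg$ as a Lie subalgebra, the nilpotency step of $\mg$ is at least $s$. For the reverse inequality, my plan is to expand an arbitrary iterated bracket $[v_1,[v_2,\ldots,[v_s,v_{s+1}]\ldots]]$ in $\mg$ bilinearly, writing each $v_i=X_i+\alpha_i$ with $X_i\in\mn_{m,s}$ and $\alpha_i\in\mn_{m,s}^*$. Since $\mn_{m,s}^*$ is an abelian ideal in the semidirect product and the bracket sends $\mn_{m,s}\times\mn_{m,s}^*$ into $\mn_{m,s}^*$, any summand containing two or more factors from $\mn_{m,s}^*$ vanishes; the all-$\mn_{m,s}$ summand vanishes because $\mn_{m,s}$ is $s$-step; and a routine induction on the position of the unique $\alpha$-factor shows that any summand with exactly one $\alpha\in\mn_{m,s}^*$ equals, viewed as a functional on $\mn_{m,s}$, $\pm\alpha$ composed with an iterated bracket of length $s+1$ in $\mn_{m,s}$, which lies in $(\mn_{m,s})^{s}=0$.

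The substance of the theorem is really contained in Theorem~\ref{thm:rude} and Lemma~\ref{lemma:cotangentirreducible}; the remaining verification above is bookkeeping with the semidirect product structure and presents no serious obstacle.
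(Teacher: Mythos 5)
Your proof is correct and follows essentially the same route as the paper: take $T^*\mn_{m,s}$ with the pairing metric, citing Lemma~\ref{lemma:cotangentirreducible} for irreducibility and Theorem~\ref{thm:rude} for nonniceness. Your restriction to $m=3$ is in fact the careful choice (the paper's own proof loosely says ``any $m\geq 2$'', which would contradict Theorem~\ref{thm:rude} for $T^*\mn_{2,3}$ and $T^*\mn_{2,4}$), and your semidirect-product bookkeeping correctly fills in the step-$s$ verification that the paper dismisses as ``easily checked''.
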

\begin{proof}
For any $m\geq 2$, $s\geq 3$, $T^*\mn_{m,s}$ admits an ad-invariant metric; it is nonnice because of Theorem~\ref{thm:rude} and irreducible by Lemma \ref{lemma:cotangentirreducible}.
Moreover, one can easily check that $T^*\mn_{m,s}$ is $s$-step nilpotent.
\end{proof}

We point out that there exist nonnice Lie algebras of step two, but they are not known explicitly (see \cite[Example~3]{Nikolayevsky}), and the methods of this paper do not seem sufficient to determine whether they give rise to nonnice, irreducible cotangents.

\bigskip

\noindent \textbf{Funding details:}
D. Conti acknowledges the MIUR Excellence Department Project awarded to the Department of Mathematics, University of Pisa, CUP I57G22000700001. D.~Conti and F.A.~Rossi acknowledge GNSAGA of INdAM and the PRIN project n. 2022MWPMAB ``Interactions between Geometric Structures and Function Theories''. F.A. Rossi acknowledges the INdAM-GNSAGA project CUPE55F22000270001 ``Curve algebriche e loro applicazioni'' and the Young Talents Award of Universit\`{a} degli Studi di Milano-Bicocca joint with Accademia Nazionale dei Lincei. V.~del Barco supported by FAEPEX-UNICAMP grant 2566/21; FAPESP grants 2021/09197-8 and 2023/15089-9.


\bibliographystyle{plain}

\bibliography{nice.bib}

\begin{thebibliography}{10}

\bibitem{BaBe07}
I.~{Bajo} and S.~{Benayadi}.
\newblock {Lie algebras with quadratic dimension equal to 2}.
\newblock {\em {J. Pure Appl. Algebra}}, 209(3):725--737, 2007.

\bibitem{BaumKath}
H.~Baum and I.~Kath.
\newblock Doubly extended {L}ie groups---curvature, holonomy and parallel
  spinors.
\newblock {\em Differential Geom. Appl.}, 19(3):253--280, 2003.

\bibitem{BdlC14}
P.~{Benito} and D.~{de-la-Concepci\'on}.
\newblock {An overview of free nilpotent Lie algebras.}
\newblock {\em {Commentat. Math. Univ. Carol.}}, 55(3):325--339, 2014.

\bibitem{demonblast}
D.~Conti.
\newblock {DEMONbLAST}, a program to compute {D}iagonal {E}instein {M}etrics
  {O}n {N}ice {L}ie {A}lgebras of {S}urjective {T}ype.
\newblock \url{https://github.com/diego-conti/DEMONbLAST}.

\bibitem{Conti:Gleipnir}
D.~Conti.
\newblock Gleipnir, a program to study the {N}ikolayevsky derivation on {L}ie
  algebras with a parameter.
\newblock \url{https://github.com/diego-conti/gleipnir}.

\bibitem{ContiDelBarcoRossi:Uniqueness}
D.~Conti, V.~del Barco, and F.~A. Rossi.
\newblock Uniqueness of ad-invariant metrics.
\newblock arXiv:2103.16477. To appear in Tohoku Mathematical Journal.

\bibitem{ContiRossi:Construction}
D.~Conti and F.~A. Rossi.
\newblock {C}onstruction of nice nilpotent {L}ie groups.
\newblock {\em J. Algebra}, 525:311--340, 2019.

\bibitem{ContiRossi:NiceNilsolitons}
D.~Conti and F.~A. Rossi.
\newblock Nice pseudo-{R}iemannian nilsolitons.
\newblock {\em J. Geom. Phys.}, 173:Paper No. 104433, 2022.

\bibitem{dBO12}
V.~{del Barco} and G.~P. {Ovando}.
\newblock {Free nilpotent Lie algebras admitting ad-invariant metrics}.
\newblock {\em {J. Algebra}}, 366:205--216, 2012.

\bibitem{dBOV}
V.~{del Barco}, G.~P. {Ovando}, and F.~{Vittone}.
\newblock {On the isometry groups of invariant Lorentzian metrics on the
  Heisenberg group}.
\newblock {\em {Mediterr. J. Math.}}, 11(1):137--153, 2014.

\bibitem{DuongPinczonUshirobira:ANewInvariant}
M.~T. Duong, G.~Pinczon, and R.~Ushirobira.
\newblock A new invariant of quadratic {L}ie algebras.
\newblock {\em Algebr. Represent. Theory}, 15:1163–1203, 2012.

\bibitem{FavreSantharoubane:adInvariant}
G.~Favre and L.~J. Santharoubane.
\newblock Symmetric, invariant, nondegenerate bilinear form on a {L}ie algebra.
\newblock {\em J. Algebra}, 105(2):451--464, 1987.

\bibitem{FernandezCulma:classification2}
E.~A. Fern\'andez-Culma.
\newblock Classification of nilsoliton metrics in dimension seven.
\newblock {\em J. Geom. Phys.}, 86:164--179, 2014.

\bibitem{GrGr90}
M.~{Grayson} and R.~{Grossman}.
\newblock {Models for free nilpotent Lie algebras}.
\newblock {\em {J. Algebra}}, 135(1):177--191, 1990.

\bibitem{Ha50}
M.~Hall.
\newblock A basis for free {L}ie rings and higher commutators in free groups.
\newblock {\em Proc. Amer. Math. Soc.}, 1:575--581, 1950.

\bibitem{KadiogluPayne:Computational}
H.~Kadioglu and T.~L. Payne.
\newblock Computational methods for nilsoliton metric {L}ie algebras {I}.
\newblock {\em J. Symbolic Comput.}, 50:350--373, 2013.

\bibitem{Kath:NilpotentSmallDim}
I.~Kath.
\newblock Nilpotent metric {L}ie algebras of small dimension.
\newblock {\em J. Lie Theory}, 17(1):41--61, 2007.

\bibitem{Kath_Olbrich_2006}
I.~Kath and M.~Olbrich.
\newblock Metric {L}ie algebras and quadratic extensions.
\newblock {\em Transformation Groups}, 11(1):87–131, 2006.

\bibitem{LauretWill:EinsteinSolvmanifolds}
J.~Lauret and C.~Will.
\newblock Einstein solvmanifolds: existence and non-existence questions.
\newblock {\em Math. Ann.}, 350(1):199--225, 2011.

\bibitem{LauretWill:diagonalization}
J.~Lauret and C.~Will.
\newblock On the diagonalization of the {R}icci flow on {L}ie groups.
\newblock {\em Proc. Amer. Math. Soc.}, 141(10):3651--3663, 2013.

\bibitem{MedinaRevoy}
A.~Medina and P.~Revoy.
\newblock {Alg\`{e}bres de Lie et produit scalaire invariant.}
\newblock {\em {Ann. Sci. \'{E}cole Norm. Sup. (4)}}, 18:553--561, 1985.

\bibitem{MeRe93}
A.~{Medina} and P.~{Revoy}.
\newblock Alg\`{e}bres de lie orthogonales modules orthogonaux.
\newblock {\em {Comm. Algebra}}, 21(7):2295--2315, 1993.

\bibitem{Nikolayevsky}
Y.~Nikolayevsky.
\newblock Einstein solvmanifolds and the pre-{E}instein derivation.
\newblock {\em Trans. Amer. Math. Soc.}, 363(8):3935--3958, 2011.

\bibitem{oneill}
B.~O'Neill.
\newblock {\em Semi-{R}iemannian geometry}, volume 103 of {\em Pure and Applied
  Mathematics}.
\newblock Academic Press, Inc. [Harcourt Brace Jovanovich, Publishers], New
  York, 1983.
\newblock With applications to relativity.

\bibitem{Payne:Applications}
T.~L. Payne.
\newblock Applications of index sets and {N}ikolayevsky derivations to positive
  rank nilpotent {L}ie algebras.
\newblock {\em J. Lie Theory}, 24(1):1--27, 2014.

\bibitem{Se92}
J-P. {Serre}.
\newblock {\em {Lie algebras and Lie groups. 1964 lectures, given at Harvard
  University.}}, volume 1500.
\newblock Berlin etc.: Springer-Verlag, 1992.

\bibitem{ZhZh01}
F.~{Zhu} and L.~{Zhu}.
\newblock {The uniqueness of the decomposition of quadratic Lie algebras}.
\newblock {\em {Commun. Algebra}}, 29(11):5145--5154, 2001.

\end{thebibliography}

\small\noindent  D.~Conti: Dipartimento di Matematica, Università di Pisa, largo Bruno Pontecorvo 6, 56127 Pisa, Italy.\\
\texttt{diego.conti@unipi.it}\\\medskip

\small\noindent V.~del Barco: Instituto de Matemática, Estatística e Computação Científica, Universidade Estadual de Campinas, Rua Sergio Buarque de Holanda, 651, Cidade Universitaria Zeferino Vaz, 13083-859, Campinas, São Paulo, Brazil.\\
\texttt{delbarc@unicamp.br}\medskip

\small\noindent F.A.~Rossi: Dipartimento di Matematica e Informatica, Universit\`a degli studi di Perugia, via Vanvitelli 1, 06123 Perugia, Italy.\\
\texttt{federicoalberto.rossi@unipg.it}

\end{document}